\newtheorem{lemma}{Lemma}%[section] %%    with section number.
\newtheorem{theorem}{Theorem}
\newcommand{\R}{\mathbb{R}}
\renewcommand{\P}{\mathbb{P}}
\newcommand{\E}{\mathbb{E}}
\newcommand{\vct}[1]{\boldsymbol{#1}}
\newcommand{\mtx}[1]{\boldsymbol{#1}}
\newcommand{\bvct}[1]{\mathbf{#1}}
\newcommand{\rank}{\operatorname{rank}}
\newcommand{\calT}{\mathcal{T}}
\newcommand{\calX}{\mathcal{X}}
\newcommand{\va}{\vct{a}}
\newcommand{\vb}{\vct{b}}
\newcommand{\ve}{\vct{e}}
\newcommand{\vq}{\vct{q}}
\newcommand{\vu}{\vct{u}}
\newcommand{\vv}{\vct{v}}
\newcommand{\vz}{\vct{z}}
\newcommand{\mA}{\mtx{A}}
\newcommand{\mB}{\mtx{B}}
\newcommand{\mC}{\mtx{C}}
\newcommand{\mM}{\mtx{M}}
\newcommand{\mQ}{\mtx{Q}}
\newcommand{\mR}{\mtx{R}}
\newcommand{\mU}{\mtx{U}}
\newcommand{\mV}{\mtx{V}}
\newcommand{\mW}{\mtx{W}}
\newcommand{\mX}{\mtx{X}}
\newcommand{\mY}{\mtx{Y}}
\newcommand{\mZ}{\mtx{Z}}
\newcommand{\mSigma}{\mtx{\Sigma}}
\newcommand{\mId}{{\bf I}}
\newlength{\imgwidth}
\newcommand{\twoCol}[2]{\ifthenelse{\boolean{twoColVersion}} {#1} {#2} }
\title{Tensor Deli: Tensor Completion for Low CP-Rank Tensors via Random Sampling}
\author{Cullen Haselby, Mark Iwen, Santhosh Karnik, and Rongrong Wang \thanks{Cullen Haselby, Mark Iwen, and Rongrong Wang are with the Department of Mathematics at Michigan State University. Mark Iwen, Santhosh Karnik, and Rongrong Wang are with the Department of Computational Mathematics, Science, and Engineering at Michigan State University. (e-mail: \hbox{haselbyc@msu.edu}, \hbox{iwenmark@msu.edu}, \hbox{karniksa@msu.edu}, \hbox{wangron6@msu.edu}). Mark Iwen and Cullen Haselby were both supported in part by NSF DMS 2106472. Rongrong Wang was supported in part by NSF CCF-2212065.  This work is an extension of \cite{haselby2023tensor}.}}
\begin{document}

\maketitle

\begin{abstract}
We propose two provably accurate methods for low CP-rank tensor completion - one using adaptive sampling and one using nonadaptive sampling. Both of our algorithms combine matrix completion techniques for a small number of slices along with Jennrich's algorithm to learn the factors corresponding to the first two modes, and then solve systems of linear equations to learn the factors corresponding to the remaining modes. For order-$3$ tensors, our algorithms follow a ``sandwich'' sampling strategy that more densely samples a few outer slices (the bread), and then more sparsely samples additional inner slices (the bbq-braised tofu) for the final completion. For an order-$d$, CP-rank $r$ tensor of size $n \times \cdots \times n$ that satisfies mild assumptions, our adaptive sampling algorithm recovers the CP-decomposition with high probability while using at most $O(nr\log r + dnr)$ samples and $O(n^2r^2+dnr^2)$ operations.  Our nonadaptive sampling algorithm recovers the CP-decomposition with high probability while using at most $O(dnr^2\log n + nr\log^2 n)$ samples and runs in polynomial time.  Numerical experiments demonstrate that both of our methods work well on noisy synthetic data as well as on real world data.
\end{abstract}

\section{Introduction}

Consider the problem of recovering an order-$d$ tensor $\calT \in \R^{n_1 \times \cdots \times n_d}$ after observing a subset of its entries. Of course, without any structural assumptions on the tensor, this is impossible. One possible structural assumption is that the tensor has a low CP-rank, i.e., that the tensor is a sum of a small number of rank one tensors. Note that for $d = 2$ this reduces to the well-studied low-rank matrix completion problem. Similar to matrix completion, tensor completion has been used in a variety of applications including recommender systems\cite{frolov2017tensor,zhu2018fairness,nguyen2023tensor}, hyperspectral imaging\cite{wang2020anomaly, lin2023hyperspectral, zhang2021learning}, visual data\cite{liu2012tensor,liu2019low}, and more \cite{tan2016short, xie2016accurate, trickett2013interpolation}. In these applications, the data can be modeled as a low rank tensor, but the data is either incomplete or expensive to acquire. As such, it is useful to be able to learn the low-rank structure of the tensor from as few observed entries as possible.

\subsection{Notation and Preliminaries}
A CP-decomposition of an order-$d$ tensor $\calT \in \R^{n_1 \times \cdots \times n_d}$ is a decomposition of the form \[\calT = \sum_{\ell = 1}^{r}\va^{(1)}_{\ell} \circ \cdots \circ \va^{(d)}_{\ell},\] where $\va^{(k)}_{\ell} \in \R^{n_k}$ for $k \in [d]$ and $\ell \in [r]$, and $\circ$ denotes the outer product. The CP-rank of a tensor is the minimum $r$ such that such a decomposition exists. It is often convenient to define the factor matrices $\mA^{(1)},\ldots,\mA^{(d)}$ of the CP-decomposition by \[\mA^{(k)} = \begin{bmatrix}\va^{(k)}_1 & \cdots & \va^{(k)}_r\end{bmatrix} \in \R^{n_k \times r} \quad \text{for} \quad k \in [d],\] and use the notation \[\calT = \left[\left[\mA^{(1)},\ldots,\mA^{(d)}\right]\right]\] to indicate that $\calT$ has a CP-decomposition using the columns of the factor matrices $\mA^{(1)},\ldots,\mA^{(d)}$. In \cite{bhaskara2014uniqueness} it is shown that if the sum of the Kruskal ranks of $\mA^{(1)},\ldots,\mA^{(d)}$ are at least $2r+d-1$, then the CP-decomposition of $\calT$ is unique up to permuting the order of the rank one terms and rescaling each vector such that the product of the scale factors for each rank one term is $1$. 

\subsection{Prior Results}
\subsubsection{Low-Rank Matrix Completion}
Many works on low-rank matrix completion make a coherence assumption on the rowspace and/or column space of the matrix to ensure that the $\ell_2$-norm energy in the matrix isn't concentrated in a few entries. Specifically, for any $r$-dimensional subspace $U \subset \R^n$, its coherence $\mu(U)$ is defined by \[\mu(U) := \dfrac{n}{r}\max_{i \in [n]}\|\text{proj}_{U}\ve_i\|_2^2,\] where $\ve_i \in \R^n$ for $i \in [n]$ denotes the $i^{\rm th}$ standard basis vector. We will also define the coherence of a $n \times r$ matrix to be the coherence of its columnspace. 

Chen\cite{chen2015incoherence} showed that if a rank-$r$ matrix $\mM \in \R^{n_1 \times n_2}$ has rowspace and columnspace coherences bounded by $\mu_0$, and each entry is sampled independently and uniformly at random with probability $c_0\tfrac{\mu_0 r\log^2(n_1+n_2)}{\min\{n_1,n_2\}}$, then with probability at least $1-c_1(n_1+n_2)^{-c_2}$, a nuclear norm minimization problem will recover $\mM$. 
Balcan and Zhang \cite{BalcanZhang2016} showed that if a rank-$r$ matrix $\mM \in \R^{n_1 \times n_2}$ has columnspace coherence bounded by $\mu_0$, then an adaptive sampling scheme allows one to complete the matrix after observing at most $O(\mu_0n_2r\log(r/\delta))+n_1r$ entries.

\subsubsection{Low CP-Rank Tensor Completion}

Many prior works on low CP-rank tensor completion use non-adaptive and uniform sampling \cite{cai2019nonconvex,jain2014provable,barak2016noisy,yuan2016tensor,potechin2017exact,montanari2018spectral,liu2020tensor,kivva2020exact,stephan2023non}, i.e., each entry of the tensor is sampled independently with some constant probability. For order $d = 3$ tensors, some of these works \cite{cai2019nonconvex,barak2016noisy,kivva2020exact} can handle CP-ranks up to roughly $n^{3/2}$, but all of these works require at least $\Omega(n^{3/2})$ samples, even when the CP-rank is $r = O(1)$. Of these works, only \cite{montanari2018spectral,stephan2023non} address higher order tensors (i.e. $d \ge 4$), but both still require at least $\Omega(n^{d/2})$ samples even when $r = O(1)$.

Krishnamurthy and Singh \cite{krishnamurthy2013low} showed that for an order-$d$ tensor $\calT \in \R^{n \times \cdots \times n}$, if its mode-$1,...,d-1$ fiberspaces have coherences bounded by $\mu_0$, then an adaptive sampling and reconstruction algorithm can recover the CP-decomposition after observing $O(d^2\mu_0^{d-1} nr^{d-1/2}\log(dr/\delta))$ samples with probability at least $1-\delta$.  In \cite{bhojanapalli2015new} an adaptive sampling algorithm that provably
recovers a symmetric rank-$r$ third order tensor from $O(nr^3\log^2 n)$ samples is proposed.
In contrast, lower sample complexity has been discovered for tucker rank based tensor completion. In particular, in  \cite{zhang2019cross} it is shown that an $n\times n\times n$ third-order tensor of Tucker rank-$(r,r,r)$
 can be recovered from as few as
$O(r^3 + rn)$ noiseless measurements. 

\subsubsection{Jennrich's Algorithm}

Jennrich's algorithm \cite{Leurgans1993} is an algorithm for recovering the CP-decomposition of an order-$3$, CP-rank $r$ tensor $\calX \in \R^{n_1 \times n_2 \times n_3}$ given all entries. Jennrich's algorithm works due to the fact that every mode-$3$ slice is simultaneously diagonalized by the mode-$1$ and mode-$2$ factor matrices. Specifically, if $\calX = [[\mA,\mB,\mC]]$, then Jennrich's algorithm first draws two random vectors $\vu,\vv \in \R^{n_3}$ and forms two random linear combinations of the mode-$3$ slices $\mX_{\vu} = \sum_{i_3 = 1}^{n_3}\vu_{i_3}\calX_{:,:,i_3}$ and $\mX_{\vv} = \sum_{i_3 = 1}^{n_3}\vv_{i_3}\calX_{:,:,i_3}$. Then, Jennrich's algorithm computes the eigendecompositions of $\mX_{\vu}\mX_{\vv}^{\dagger}$ and $\mX_{\vv}\mX_{\vu}^{\dagger}$. It can be shown that if $\rank(\mA) = \rank(\mB) = r$ and $\mC$ has Kruskal-rank $\ge 2$, then with probability $1$, the eigenvectors of $\mX_{\vu}\mX_{\vv}^{\dagger}$ are $\{\va_{\ell}\}_{\ell = 1}^{r}$, the eigenvectors of $\mX_{\vv}\mX_{\vu}^{\dagger}$ are $\{\vb_{\ell}\}_{\ell = 1}^{r}$, and the eigenvalue of $\mX_{\vu}\mX_{\vv}^{\dagger}$ corresponding to $\va_{\ell}$ is the reciprocal of the eigenvalue of $\mX_{\vv}\mX_{\vu}^{\dagger}$ corresponding to $\vb_{\ell}$. Hence, Jennrich's algorithm can recover the columns of $\mA$ and $\mB$, but also pair them correctly. If needed, $\mC$ can then be recovered by solving a system of linear equations.

\subsection{Main Contributions}

In this paper, we introduce two algorithms with rigorous theoretical guarantees for recovering the CP-decomposition of an order-$d$, CP-rank $r$ tensor after observing a subset of its entries - one using adaptive sampling and another using nonadaptive sampling. Both of our algorithms involve densely sampling a few mode-$(1,2)$ slices of the tensor and sparsely sampling other subtensors in a way such that the total number of samples is only slightly worse than linear in each of the dimensions of the tensor. Also, both of our algorithms work by using a matrix completion algorithm on the densely sampled mode-$(1,2)$ slices, using Jennrich's algorithm with the completed slices to learn the first two modes of the CP-decomposition, and then solving systems of linear equations to learn the remaining modes.

With mild assumptions (discussed in Section~\ref{sec:MainResults}), our adaptive sampling algorithm recovers the CP-decomposition of an order-$d$, CP-rank $r$ tensor $\calT \in \R^{n \times \cdots \times n}$ using $O(\mu_0 nr\log(r/\delta)+dnr)$ samples with probability $\ge 1-\delta$. This is a noticeable improvement over the adaptive sampling algorithm in \cite{krishnamurthy2013low} which requires $O(d^2\mu_0^{d-1} nr^{d-1/2}\log(dr/\delta))$ samples, but uses slightly different assumptions. It is also only a mild log-factor worse than the information theoretic bound of $O(dnr)$ samples.  Furthermore, our algorithm runs in $O(n^2r^2 + dnr^2)$ operations.

Additionally, with mild assumptions (also discussed in Section~\ref{sec:MainResults}), our nonadaptive random sampling algorithm recovers the CP-decomposition of an order-$d$, CP-rank $r$ tensor $\calT \in \R^{n \times \cdots \times n}$ using $O(d\mu_0^2nr^2\log n + \mu_0 nr\log^2 n)$ samples with high probability. This is a significant improvement over nonadaptive and uniformly random sampling which requires at least $\Omega(n^{d/2})$ samples. Furthermore, our algorithm runs in polynomial time. Specifically, the dominating cost of this algorithm is solving a few nuclear norm minimization problems to complete the densely sampled mode-$(1,2)$ slices.

In Section~\ref{sec:Order3Tensors}, we build up intuition for our algorithms by presenting simpler versions to handle order-$3$ tensors. In Section~\ref{sec:MainResults}, we present our main results for order-$d$ tensors where the factor matrices corresponding to modes $3,\ldots,d$ have no entries which are exactly zero. In Section~\ref{sec:GeneralResults}, we present results for the most general case where the tensor has order-$d$ and each column of the factor matrices corresponding to modes $3,\ldots,d$ is allowed to have a specified fraction of its entries exactly equal to zero. In Section~\ref{sec:Experiments}, we present numerical experiments on both synthetic data and real world data which demonstrates that our algorithm is robust to noise. The proofs of our results are in the Appendix.

\section{Order-3 Tensors}
\label{sec:Order3Tensors}
In order to build intuition for our algorithms, we will first consider the simplest case where our tensor has order $d = 3$. In both our adaptive and non-adaptive algorithms, our algorithm will involve densely sampling $s \ll n_3$ mode-$3$ slices of the tensor and sparsely sampling the remaining slices. See Figure~\ref{fig:SamplingPattern} for a depiction of the sampling patterns. Also, the reconstruction method for both algorithms follows the outline below: 
\begin{enumerate}
    \item Use a matrix completion algorithm on the $s$ densely sampled slices $\calT_{:,:,i_3}$, $i_3 \in S$.
    \item Use Jennrich's algorithm on the completed $n_1 \times n_2 \times s$ subtensor to learn $\mA^{(1)}$ and $\mA^{(2)}$.
    \item Solve systems of linear equations to determine $\mA^{(3)}$.
\end{enumerate}
Our adaptive and nonadaptive sampling algorithms will differ in how many samples are taken, when the samples are taken, as well as which matrix completion algorithm is used in step 1.

\begin{figure}[H]
\centering
    \includegraphics[scale = 0.225]{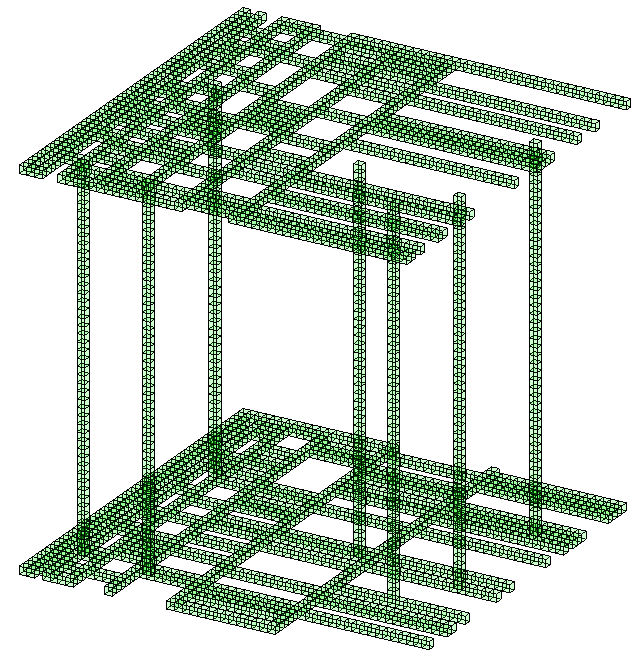} \hspace{1 in} \includegraphics[scale = 0.225]{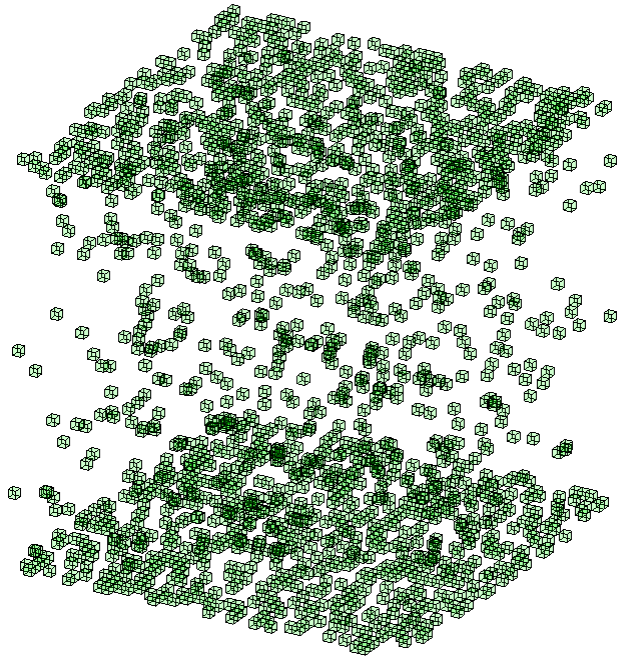}
    \caption{Illustration of the sampling patterns of our adaptive tensor sandwich (left) and nonadaptive tensor sandwich (right) algorithms}
    \label{fig:SamplingPattern}
\end{figure}

We now state the simplified version of our adaptive Tensor Sandwich algorithm for order-$3$ tensors as well as a theorem with the theoretical guarantees.

\begin{algorithm}[H]
\begin{algorithmic}[1]
\caption{Adaptive Tensor Sandwich for Order-$3$ tensors}
\label{alg:3mode_adapt}
    \State Pick a subset $S \subset [n_3]$ with $|S| = s$ indices.
    \For{$i_3 \in S$}
        \State Use the algorithm in \cite{BalcanZhang2016} to sample and complete $\calT_{:,:,i_3}$.
    \EndFor
    \State $[\mA^{(1)},\mA^{(2)}] = \text{Jennrich}(\calT_{:,:,S})$ 
    \State Perform QR decomposition with pivoting on $(\mA^{(1)} \odot \mA^{(2)})^T$ to identify a subset of $r$ linearly independent rows of $\mA^{(1)} \odot \mA^{(2)}$. Let $L \subset [n_1] \times [n_2]$ be the subset of indices $(i_1,i_2)$ corresponding to these rows.
    \For{$i_3 \in [n]$}
        \State Sample $\calT_{i_1,i_2,i_3}$ for $(i_1,i_2) \in L$
        \State Solve the system of equations for $\{\mA^{(3)}_{i_3,\ell}\}_{\ell = 1}^{r}$ \vspace{-0.1 in} $$\sum_{\ell = 1}^{r}\mA^{(1)}_{i_1,\ell}\mA^{(2)}_{i_2,\ell}\mA^{(3)}_{i_3,\ell} = \calT_{i_1,i_2,i_3} \quad \text{for} \quad (i_1,i_2) \in L.$$ \vspace{-0.1 in}
    \EndFor
    \State \Return $\mA^{(1)},\mA^{(2)},\mA^{(3)}$
\end{algorithmic}
\end{algorithm}

\begin{theorem} 
\label{thm:3mode_adapt}
Suppose that the factor matrices satisfy the following assumptions 
\begin{enumerate}
\item[(a)] $\mu(\mA^{(1)}) \le \mu_0$
\item[(b)] $\rank(\mA^{(1)}) = \rank(\mA^{(2)}) = r$    
\item[(c)] every $s \times r$ submatrix of $\mA^{(3)}$ has Kruskal rank $\ge 2$
\end{enumerate}
Then, with probability at least $1-s\delta$, Algorithm~\ref{alg:3mode_adapt} completes $\calT$ and uses at most \[O\left(s\mu_0 n_2r\log(r/\delta)\right) + sn_1r + n_3r\] samples.
\end{theorem}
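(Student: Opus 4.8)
The plan is to exploit the fact that the only randomness in Algorithm~\ref{alg:3mode_adapt} enters through the matrix-completion step on line~3; once the $s$ dense slices are recovered exactly, every later step is deterministic (Jennrich on line~4 succeeds with probability one over its internal random vectors, and lines~5 and~8 are pure linear algebra). I would therefore decompose the argument into three stages — (i) exact completion of the $s$ dense slices, (ii) exact recovery of $\mA^{(1)},\mA^{(2)}$ via Jennrich, and (iii) exact recovery of $\mA^{(3)}$ via the linear systems — and then tally the samples contributed by each.

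For stage (i), fix $i_3\in S$ and write the slice as $\calT_{:,:,i_3}=\mA^{(1)}\diag(\mA^{(3)}_{i_3,:})(\mA^{(2)})^{\T}$. Its column space $V_{i_3}$ is a subspace of $\Range(\mA^{(1)})$, of some dimension $r'\le r$. The crucial coherence estimate is that orthogonal projection onto a subspace only shrinks norms, so $\|\text{proj}_{V_{i_3}}\ve_i\|_2\le\|\text{proj}_{\Range(\mA^{(1)})}\ve_i\|_2$ for every $i$, whence $\mu(V_{i_3})\le\tfrac{r}{r'}\mu(\mA^{(1)})\le\tfrac{r}{r'}\mu_0$ by assumption~(a). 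Feeding a rank-$r'$ matrix of coherence $\mu(V_{i_3})$ into the adaptive scheme of \cite{BalcanZhang2016} recovers $\calT_{:,:,i_3}$ exactly with probability $\ge 1-\delta$ using $O(\mu(V_{i_3})n_2 r'\log(r'/\delta))+n_1 r'$ samples; since $\mu(V_{i_3})\,r'\le\mu_0 r$, this is at most $O(\mu_0 n_2 r\log(r/\delta))+n_1 r$ per slice, regardless of whether the slice is rank-deficient. A union bound over the $s$ slices gives exact completion of all of them with probability $\ge 1-s\delta$ at a cost of $O(s\mu_0 n_2 r\log(r/\delta))+s n_1 r$ samples.

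For stage (ii), the completed subtensor equals $[[\mA^{(1)},\mA^{(2)},\mA^{(3)}_{S,:}]]$, and assumptions~(b) and~(c) are precisely the hypotheses under which Jennrich's algorithm succeeds: $\rank(\mA^{(1)})=\rank(\mA^{(2)})=r$, and the $s\times r$ submatrix $\mA^{(3)}_{S,:}$ has Kruskal rank $\ge 2$ (which also rules out zero or parallel columns, guaranteeing nonzero diagonals and distinct eigenvalue ratios). Thus line~4 returns $\mA^{(1)},\mA^{(2)}$ exactly, up to the unavoidable common permutation and column rescaling, with the two factors correctly paired. For stage (iii), since $\mA^{(1)},\mA^{(2)}$ have full column rank $r$, the Kruskal-rank inequality for Khatri--Rao products forces $\rank(\mA^{(1)}\odot\mA^{(2)})=r$; hence QR with column pivoting on $(\mA^{(1)}\odot\mA^{(2)})^{\T}$ in line~5 returns an index set $L$ with $|L|=r$ whose corresponding $r\times r$ submatrix $\mM_L$ of $\mA^{(1)}\odot\mA^{(2)}$ is invertible. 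For each $i_3\in[n_3]$, the line~8 system reads $\mM_L(\mA^{(3)}_{i_3,:})^{\T}=(\calT_{i_1,i_2,i_3})_{(i_1,i_2)\in L}$, which has a unique solution and recovers the $i_3$-th row of $\mA^{(3)}$ in the scaling convention fixed by Jennrich, using $r$ samples. Adding the $n_3 r$ samples from this stage to the bound from stage~(i) yields the claimed total $O(s\mu_0 n_2 r\log(r/\delta))+s n_1 r+n_3 r$.

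The step demanding the most care is stage (i), specifically the coherence bookkeeping for slices that fail to have full rank: a priori a subspace of a $\mu_0$-coherent space can have strictly larger coherence, and it is the clean cancellation $\mu(V_{i_3})\,r'\le\mu_0 r$ that keeps the per-slice budget at $O(\mu_0 n_2 r\log(r/\delta))+n_1 r$ and hence preserves the overall bound. A secondary point I would state explicitly is the consistency of the permutation/scaling ambiguity: because the line~8 systems are solved against the same representative of $(\mA^{(1)},\mA^{(2)})$ that Jennrich produced, the three recovered factors are mutually consistent and reproduce $\calT$, so the algorithm indeed completes the tensor.
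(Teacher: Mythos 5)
Your proposal is correct and follows essentially the same route as the paper: the paper proves this theorem as the $m=1$, $d=3$ specialization of its general adaptive result (Theorem~\ref{thm:adaptive_sampling}), whose proof consists of exactly your three stages --- per-slice completion via the adaptive scheme of \cite{BalcanZhang2016} with a union bound over the $s$ slices, Jennrich's algorithm on the completed $n_1 \times n_2 \times s$ subtensor under assumptions (b) and (c), and censored least squares on QR-pivot-selected rows of $\mA^{(1)} \odot \mA^{(2)}$ costing $r$ samples per mode-$3$ slice. If anything, your coherence bookkeeping for rank-deficient slices (the bound $\mu(V_{i_3})\,r' \le \mu_0 r$) is spelled out more carefully than in the paper, which simply asserts that each slice satisfies the assumptions of \cite{BalcanZhang2016}.
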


We note that assumption (a) is a standard incoherence assumption that is made by most matrix and tensor completion results, while assumptions (b) and (c) are precisely the necessary assumptions for Jennrich's algorithm to work on the subtensor $\calT_{:,:,S}$. Also, if $\mA^{(3)}$ is drawn randomly from a continuous distribution, assumption (c) with $s = 2$ holds with probability $1$. Hence, for a typical $n \times n \times n$ tensor, Algorithm~\ref{alg:3mode_adapt} completes $\calT$ with probability at least $1-2\delta$ using $O(\mu_0 nr\log(r/\delta))$ samples. 

We also state the simplified version of our nonadaptive Tensor Sandwich algorithm for order-$3$ tensors as well as a theorem with the theoretical guarantees.

\begin{algorithm}[H]
\begin{algorithmic}[1]
\caption{Nonadaptive Tensor Sandwich for Order-$3$ tensors}
\label{alg:3mode_nonadapt}
    \State Pick a subset $S \subset [n_3]$ with $|S| = s$ indices.
    \State Form a subset $\Omega \subset [n_1] \times [n_2] \times [n_3]$ by independently including each entry $(i_1,i_2,i_3) \in [n_1] \times [n_2] \times [n_3]$ in $\Omega$ with probability \vspace{-0.1 in} $$\begin{cases}c_0\dfrac{\mu_0 r\log^2(n_1+n_2)}{\min\{n_1,n_2\}} & \text{if } i_3 \in S \\  \\ c_3\dfrac{\mu_0^2 r^2\log n_3}{n_1n_2} & \text{if } i_3 \not\in S \end{cases}$$ \vspace{-0.1 in}
    \For{$i_3 \in S$}
        \State Use nuclear norm minimization complete $\calT_{:,:,i_3}$.
    \EndFor
    \State $[\mA^{(1)},\mA^{(2)}] = \text{Jennrich}(\calT_{:,:,S})$ 
    \For{$i_3 \in [n]$}
        \State Solve the system of equations for $\{\mA^{(3)}_{i_3,\ell}\}_{\ell = 1}^{r}$ \vspace{-0.1 in} $$\sum_{\ell = 1}^{r}\mA^{(1)}_{i_1,\ell}\mA^{(2)}_{i_2,\ell}\mA^{(3)}_{i_3,\ell} = \calT_{i_1,i_2,i_3} \quad \text{for} \quad (i_1,i_2) \text{ s.t. } (i_1,i_2,i_3) \in \Omega.$$ \vspace{-0.1 in}
    \EndFor
    \State \Return $\mA^{(1)},\mA^{(2)},\mA^{(3)}$
\end{algorithmic}
\end{algorithm}

\begin{theorem}
\label{thm:3mode_nonadapt}
Suppose that the factor matrices satisfy the following assumptions
\begin{enumerate}
\item[(a)] $\mu(\mA^{(1)}) \le \mu_0$ and $\mu(\mA^{(2)}) \le \mu_0$
\item[(b)] $\rank(\mA^{(1)}) = \rank(\mA^{(2)}) = r$
\item[(c)] every $s \times r$ submatrix of $\mA^{(3)}$ has Kruskal rank $\ge 2$
\end{enumerate}
Then, Algorithm~\ref{alg:3mode_nonadapt} completes $\calT$ with probability at least $1-c_1s(n_1+n_2)^{-c_2}-rn_3^{-(c_3-1)}$. Furthermore, with high probability, the number of randomly drawn samples is at most \[O\left(s\mu_0r\max\{n_1,n_2\}\log^2(n_1+n_2) + \mu_0^2 r^2n_3\log n_3\right).\] Here, $c_0, c_1, c_2 > 0$ are the constants in \cite{chen2015incoherence}, and $c_3$ can be any number larger than $1$.
\end{theorem}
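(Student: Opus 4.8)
The plan is to follow the algorithm's three stages—dense slice completion, Jennrich's algorithm, and the per-slice linear solves—and to show that each stage succeeds outside a small-probability event, so that a union bound yields the stated failure probability with exactly two surviving terms, coming from the completion stage and the linear-solve stage respectively. The sample-count claim is then a separate concentration estimate on a sum of independent Bernoulli indicators.

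For the first stage, fix $i_3 \in S$ and note that the slice $\calT_{:,:,i_3} = \mA^{(1)}\diag\!\big(\mA^{(3)}_{i_3,:}\big)\big(\mA^{(2)}\big)^{\T}$ has rank at most $r$, with column space contained in the range of $\mA^{(1)}$ and row space contained in the range of $\mA^{(2)}$. Since coherence can grow by at most the ratio of the ambient to the subspace dimension when passing to a subspace, the true row- and column-space coherences of the slice are bounded by $\tfrac{r}{r'}\mu_0$, where $r'$ is the slice's rank; hence the sampling rate $c_0\mu_0 r\log^2(n_1+n_2)/\min\{n_1,n_2\}$ is at least the rate Chen's theorem \cite{chen2015incoherence} requires for a rank-$r'$, coherence-$\tfrac{r}{r'}\mu_0$ matrix, so nuclear-norm minimization recovers each slice exactly with probability at least $1-c_1(n_1+n_2)^{-c_2}$. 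A union bound over the $s$ slices gives the first error term $c_1 s(n_1+n_2)^{-c_2}$. On this event the recovered subtensor $\calT_{:,:,S}$ equals $[[\mA^{(1)},\mA^{(2)},\mA^{(3)}_{S,:}]]$ exactly, and assumptions (b) and (c) are precisely the hypotheses under which Jennrich's algorithm recovers $\mA^{(1)},\mA^{(2)}$ (up to the unavoidable permutation/scaling ambiguity) with probability $1$ over its internal random vectors.

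The crux is the third stage. For each $i_3$ the linear system has coefficient matrix equal to the row-submatrix of the Khatri--Rao product $\mG := \mA^{(1)}\odot\mA^{(2)} \in \R^{n_1 n_2 \times r}$ indexed by the sampled pairs $(i_1,i_2)$, and it determines $\mA^{(3)}_{i_3,:}$ uniquely exactly when this submatrix has rank $r$. Because $\rank(\mA^{(1)})=\rank(\mA^{(2)})=r$, a direct argument shows $\mG$ itself has rank $r$. I would first bound its column-space coherence: the range of $\mG$ sits inside the range of the Kronecker product $\mU_1\otimes\mU_2$, where $\mU_1,\mU_2$ are orthonormal bases for the ranges of $\mA^{(1)},\mA^{(2)}$, and the row leverage scores of $\mU_1\otimes\mU_2$ factor as products and are thus at most $\tfrac{r^2\mu_0^2}{n_1n_2}$, giving coherence at most $\mu_0^2$; the subspace-coherence inequality (ambient dimension $r^2$, subspace dimension $r$) then yields $\mu(\mG)\le r\mu_0^2$. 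Writing the sampled Gram matrix in whitened coordinates $\tilde{\vg}=(\mG^{\T}\mG)^{-1/2}\vg$ as a sum of independent rank-one terms $\xi_{(i_1,i_2)}\tilde{\vg}_{(i_1,i_2)}\tilde{\vg}_{(i_1,i_2)}^{\T}$—whose total expectation is a multiple of the identity and each of which is bounded by the maximum leverage score $\tfrac{r\,\mu(\mG)}{n_1n_2}\le\tfrac{r^2\mu_0^2}{n_1n_2}$—the lower-tail matrix Chernoff bound (taking the deviation parameter to $1$, so that positivity of $\lambda_{\min}$ is certified) shows that with the sparse rate $c_3\mu_0^2 r^2\log n_3/(n_1n_2)$ the submatrix has rank $r$ except with probability at most $r\,n_3^{-c_3}$ per slice. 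Union bounding over the at most $n_3$ slices gives the second error term $r\,n_3^{-(c_3-1)}$, while the densely sampled slices $i_3\in S$ carry far more samples and satisfy the rank condition a fortiori.

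Finally, the number of drawn samples is a sum of independent Bernoulli indicators with expectation $s\,n_1 n_2\cdot c_0\mu_0 r\log^2(n_1+n_2)/\min\{n_1,n_2\} + (n_3-s)\,c_3\mu_0^2 r^2\log n_3$, which is $O\big(s\mu_0 r\max\{n_1,n_2\}\log^2(n_1+n_2)+\mu_0^2 r^2 n_3\log n_3\big)$, and a standard Chernoff bound shows the realized count does not exceed a constant multiple of this with high probability. Combining the three stages with this estimate gives the theorem. I expect the main obstacle to be the third stage—specifically, establishing the coherence bound $\mu(\mG)\le r\mu_0^2$ for the Khatri--Rao product and then threading the matrix-Chernoff argument so that the per-slice failure probability is exactly $r\,n_3^{-c_3}$ at the prescribed sampling rate; the first two stages amount to careful bookkeeping on top of the cited guarantees for nuclear-norm completion and Jennrich's algorithm.
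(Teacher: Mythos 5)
Your proposal is correct and matches the paper's argument in all essentials: the paper proves this theorem as the $d=3$, $m=1$ special case of its general nonadaptive result (Theorem~\ref{thm:non_adaptive_dust_sampling}), whose proof is exactly your three-stage union bound --- Chen's guarantee on the $s$ dense slices, Jennrich's algorithm under assumptions (b)--(c), and a matrix-Chernoff argument showing that the randomly sampled rows of $\mA^{(1)}\odot\mA^{(2)}$ (whose coherence is bounded by $\mu_0^2 r$ via the Kronecker leverage-score factorization, the paper's Lemma~\ref{lem:coherence_KR}, applied inside its Lemma~\ref{lem:random_subset_full_rank}) remain full rank with per-slice failure probability $rn_3^{-c_3}$, plus a binomial Chernoff bound for the sample count. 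Your more careful handling of possibly rank-deficient slices in the first stage, and of the densely sampled slices in the linear-solve stage, are minor refinements of the same route rather than a different approach.
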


We note that the price of having nonadaptive sampling is that we need more samples in the densely sampled slices so that nuclear norm minimization completes the slices, as well as in the sparsely sampled slices so that the systems of equations for the mode-$3$ factor matrix are fully determined. Additionally, we need a coherence assumption on both $\mA^{(1)}$ and $\mA^{(2)}$ instead of just $\mA^{(2)}$ so that the assumptions of the result in \cite{chen2015incoherence} hold. Again, for a typical $n \times n \times n$ tensor, the third assumption is satisfied for $s = 2$, and thus, Algorithm~\ref{alg:3mode_nonadapt} completes $\calT$ with high probability using $O(\mu_0 nr\log^2 n + \mu_0^2 n r^2 \log n)$ samples. By sampling $s$ slices of the tensor densely and $n-s$ slices of the tensor sparsely, we are able to reduce the sample complexity from having an $n^{3/2}$ dependence on $n$ to having a slightly worse than linear dependence on $n$.

\section{Main Results}
\label{sec:MainResults}
We will now consider the case where the tensor has order-$d$, but the factor matrices $\mA^{(3)},\ldots,\mA^{(d)}$ have no entries which are exactly zero (We relax this assumption in Section~\ref{sec:GeneralResults}). This assumption has a couple important consequences. First, any mode-$(1,2)$ slice is rank-$r$. Hence, performing Jennrich's algorithm on any mode-$(1,2,3)$ subtensor of size $n_1 \times n_2 \times s$ will allow one to recover all $r$ columns of $\mA^{(1)}$ and $\mA^{(2)}$. Furthermore, any mode-$(1,2,k)$ subtensor has CP-rank $r$. So, after performing Jennrich's algorithm to learn the factor matrices $\mA^{(1)}$ and $\mA^{(2)}$, one can perform the slice-by-slice censored least squares procedure (below) on any mode-$(1,2,k)$ subtensor to learn the mode-$k$ factor matrix $\mA^{(k)}$. 

\begin{algorithm}[H]
\textbf{Inputs:} 
\\
Subset $\Omega \subset [n_1] \times [n_2] \times [n_3]$ of observed entries 
\\
Samples $\calX_{i_1,i_2,i_3}$ for $(i_1,i_2,i_3) \in \Omega$ of an order-$3$ tensor $\calX \in \R^{n_1 \times n_2 \times n_3}$ with CP-rank $k \le r$. 
\\
Factor matrices $\mA^{(1)} \in \R^{n_1 \times r}$ and $\mA^{(2)} \in \R^{n_2 \times r}$ such that $\calX = \left[\left[\mA^{(1)},\mA^{(2)},\mA^{(3)}\right]\right]$ for some $\mA^{(3)} \in \R^{n_3 \times r}$
\begin{algorithmic}[1]
\caption{Slice-by-slice Censored Least Squares}
\label{alg:CLS}
    \For{$i_3 \in [n_3]$}
    \State Solve the system of equations for $\{\mA^{(3)}_{i_3,\ell}\}_{\ell = 1}^{r}$ \vspace{-0.1 in} $$\sum_{\ell = 1}^{r}\mA^{(1)}_{i_1,\ell}\mA^{(2)}_{i_2,\ell}\mA^{(3)}_{i_3,\ell} = \calX_{i_1,i_2,i_3} \quad \text{for} \quad (i_1,i_2) \quad \text{s.t.} \quad (i_1,i_2,i_3) \in \Omega$$ \vspace{-0.1 in}
    \EndFor
    \State \Return $\mA^{(3)}$
\end{algorithmic}
\end{algorithm}

We now state simplified version of our adaptive Tensor Deli algorithm for order-$d$ tensors with no zeros in $\mA^{(3)},\ldots,\mA^{(d)}$ as well as a theorem with the theoretical guarantees.

\begin{algorithm}[H]
\begin{algorithmic}[1]
\caption{Adaptive Tensor Deli for Order-$d$ tensors with no zeros in $\mA^{(3)},\ldots,\mA^{(d)}$}
\label{alg:zerofree_adapt}
    \State Pick any subset $S \subset [n_3]$ with $|S| = s$ indices
    \State Pick any indices $i^*_3 \in [n_3], \ldots, i^*_d \in [n_d]$.
    \For{$i_3 \in S$}
        \State Use the algorithm in \cite{BalcanZhang2016} to sample and complete $\calT_{:,:,i_3,i^*_4,\ldots,i^*_d}$.
    \EndFor
    \State $[\mA^{(1)},\mA^{(2)}] = \text{Jennrich}(\calT_{:,:,S,i^*_4,\ldots,i^*_d})$ 
    \State Perform QR decomposition with pivoting on $(\mA^{(1)} \odot \mA^{(2)})^T$ to identify a subset of $r$ linearly independent rows of $\mA^{(1)} \odot \mA^{(2)}$. Let $L \subset [n_1] \times [n_2]$ be the subset of indices $(i_1,i_2)$ corresponding to these rows.
    \For{$k = 3,\ldots,d$}
        \State Sample $\calT_{i_1,i_2,i^*_3,\ldots,i^*_{k-1},i_k,i^*_{k+1},\ldots,i^*_d}$ for $(i_1,i_2) \in L$ and $i_k \in [n_k]$.
        \State $\mA^{(k)} = \text{CensoredLeastSquares}(\calT_{:,:,i^*_3,\ldots,i^*_{k-1},:,i^*_{k+1},\ldots,i^*_d}, S' \times [n_k],\mA^{(1)},\mA^{(2)})$
    \EndFor
    \For{$\ell = 1,\ldots,r$}
        \State $\mA^{(d)}_{:,\ell} \leftarrow \left(\prod_{k = 3}^{d-1}\mA^{(k)}_{i^*_k,\ell}\right)^{-1}\mA^{(d)}_{:,\ell}$
    \EndFor
    \State \Return $\mA^{(1)},\ldots,\mA^{(d)}$
\end{algorithmic}
\end{algorithm}

\begin{theorem}
\label{thm:zerofree_adapt}
Suppose that the factor matrices satisfy the following assumptions

% \begin{enumerate}[(a)]
%     \item $\mu(\mA^{(1)}) \le \mu_0$
%     \item $\rank(\mA^{(1)}) = \rank(\mA^{(2)}) = r$
%     \item every $s \times r$ submatrix of $\mA^{(3)}$ has Kruskal rank $\ge 2$
%     \item for $k = 3,\ldots,d$, $\mA^{(k)}$ has no entries which are exactly $0$.
% \end{enumerate}

\begin{itemize}
    \item[(a)] $\mu(\mA^{(1)}) \le \mu_0$
    \item[(b)] $\rank(\mA^{(1)}) = \rank(\mA^{(2)}) = r$
    \item[(c)] every $s \times r$ submatrix of $\mA^{(3)}$ has Kruskal rank $\ge 2$
    \item[(d)] for $k = 3,\ldots,d$, $\mA^{(k)}$ has no entries which are exactly $0$.
\end{itemize}

Then, with probability at least $1-s\delta$, Algorithm~\ref{alg:zerofree_adapt} completes $\calT$ and uses at most \[O(s\mu_0 n_2r\log(r/\delta)) + sn_1r + r\sum_{k = 3}^{d}n_k\] samples.
\end{theorem}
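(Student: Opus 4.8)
The plan is to reduce the entire analysis to the order-3 machinery behind Theorem~\ref{thm:3mode_adapt}, using assumption (d) as the bridge between the order-$d$ tensor and the order-3 subtensors on which Algorithm~\ref{alg:zerofree_adapt} actually operates. The central observation is that fixing the trailing indices $i^*_4,\ldots,i^*_d$ (or, in the censored-least-squares loop, all but one of the trailing indices) merely rescales each rank-one term by a nonzero constant, since by (d) every entry $\mA^{(k)}_{i^*_k,\ell}$ is nonzero. Concretely, the densely sampled subtensor $\calT_{:,:,S,i^*_4,\ldots,i^*_d}$ admits the CP-decomposition $[[\mA^{(1)},\mA^{(2)},\widetilde{\mA}^{(3)}]]$, where $\widetilde{\mA}^{(3)}$ is the restriction of $\mA^{(3)}$ to the rows indexed by $S$ with each column $\ell$ scaled by $\prod_{k=4}^{d}\mA^{(k)}_{i^*_k,\ell}$. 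Because column scaling by nonzero constants preserves Kruskal rank, assumption (c) guarantees that $\widetilde{\mA}^{(3)}$ has Kruskal rank $\ge 2$, so together with (b) the hypotheses of Jennrich's algorithm hold for this subtensor.

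First I would verify the matrix-completion step. Each slice $\calT_{:,:,i_3,i^*_4,\ldots,i^*_d}$ equals $\mA^{(1)}\mD_{i_3}(\mA^{(2)})^{\T}$ for a diagonal matrix $\mD_{i_3}$ whose entries $\mA^{(3)}_{i_3,\ell}\prod_{k=4}^{d}\mA^{(k)}_{i^*_k,\ell}$ are all nonzero by (d); hence each slice has rank exactly $r$ and columnspace equal to $\Range(\mA^{(1)})$, whose coherence is at most $\mu_0$ by (a). The algorithm of \cite{BalcanZhang2016} therefore completes each slice from $O(\mu_0 n_2 r\log(r/\delta)) + n_1 r$ samples and succeeds with probability at least $1-\delta$; a union bound over the $s$ slices yields overall success probability at least $1-s\delta$. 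On this event Jennrich returns $\mA^{(1)}$ and $\mA^{(2)}$ up to a common column permutation and per-column scaling.

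Next I would handle the censored-least-squares loop. Since $\rank(\mA^{(1)}) = \rank(\mA^{(2)}) = r$, the Khatri--Rao product $\mA^{(1)}\odot\mA^{(2)}$ has full column rank $r$, so the pivoted-QR step returns an index set $L$ of size $r$ whose corresponding $r\times r$ submatrix of $\mA^{(1)}\odot\mA^{(2)}$ is invertible. For each mode $k\in\{3,\ldots,d\}$ the subtensor $\calT_{:,:,i^*_3,\ldots,i^*_{k-1},:,i^*_{k+1},\ldots,i^*_d}$ has CP-decomposition $[[\mA^{(1)},\mA^{(2)},\widehat{\mA}^{(k)}]]$, where $\widehat{\mA}^{(k)}_{i_k,\ell} = \mA^{(k)}_{i_k,\ell}\prod_{j\in\{3,\ldots,d\}\setminus\{k\}}\mA^{(j)}_{i^*_j,\ell}$; sampling the $r n_k$ entries indexed by $L\times[n_k]$ and solving the invertible $r\times r$ system for each fiber recovers $\widehat{\mA}^{(k)}$ exactly.

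Finally I would reconcile the scalings and count samples. Each recovered $\widehat{\mA}^{(k)}$ carries the per-column factor $\prod_{j\ne k,\, j\ge 3}\mA^{(j)}_{i^*_j,\ell}$, and the closing rescaling of $\mA^{(d)}$ by $\big(\prod_{k=3}^{d-1}\mA^{(k)}_{i^*_k,\ell}\big)^{-1}$ cancels the accumulated product so that the returned factors multiply back to $\calT$; here (d) is exactly what makes every scale factor invertible and the bookkeeping consistent. Summing the counts, namely $s\big(O(\mu_0 n_2 r\log(r/\delta))+n_1 r\big)$ for completion and $\sum_{k=3}^{d} r n_k$ for the least-squares fibers (with Jennrich reusing already-completed entries), gives the stated bound. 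I expect the main obstacle to be precisely this scaling bookkeeping: one must track the per-column scale and permutation ambiguity introduced by Jennrich through each censored-least-squares solve and confirm that the final rescaling yields a genuine CP-decomposition of the \emph{full} tensor rather than merely of the individual order-3 subtensors.
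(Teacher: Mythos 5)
Your proposal is correct and is essentially the paper's own argument: the paper proves Theorem~\ref{thm:zerofree_adapt} by invoking its general result (Theorem~\ref{thm:adaptive_sampling}) with $z=0$ and $m=1$, and the proof of that general result is exactly what you wrote — Balcan--Zhang \cite{BalcanZhang2016} slice completion with a union bound over the $s$ slices, Jennrich's algorithm after observing that fixing the trailing indices only rescales the columns of $\mA^{(3)}_{S,:}$ by nonzero constants (which preserves Kruskal rank), pivoted QR on the full-column-rank Khatri--Rao product $\mA^{(1)} \odot \mA^{(2)}$ followed by censored least squares for each mode, and the final per-column scaling cancellation. The only difference is organizational (you prove the special case directly rather than as a corollary), and your explicit verification of the scaling bookkeeping is, if anything, more careful than the paper's.
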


For a typical order-$d$ tensor of size $n \times \cdots \times n$, we have $s = 2$ and Algorithm~\ref{alg:zerofree_adapt} completes $\calT$ with probability at least $1-2\delta$ using $O(\mu_0 nr\log(r/\delta))+dnr$ samples. By comparison, the result in \cite{krishnamurthy2013low} requires $\mathcal{O}(\mu_0^{d-1} nr^{d-1/2}\log(dr/\delta))$ samples. The sample complexity of our algorithm has a more favorable dependence on the coherence $\mu_0$ and rank $r$. Our result only requires coherence assumptions about $\mA^{(1)}$, instead of $\mA^{(1)},\ldots,\mA^{(d-1)}$, but at the expense of requiring assumptions (b) and (c) for Jennrich's algorithm to work and assumption (d) for every slice to contain a non-zero amount of each rank one component. Additionally, Algorithm~\ref{alg:zerofree_adapt} runs in $O(n^2r^2 + (d-2)nr^2)$ operations. For a detailed discussion on the runtime of Algorithm~\ref{alg:zerofree_adapt} (as well as its generalization where we relax the assumption that $\mA^{(3)},\ldots,\mA^{(d)}$ have no zeros), see Appendix~\ref{sec:AdaptiveRuntime}.

We also state simplified version of our nonadaptive Tensor Deli algorithm for order-$d$ tensors with no zeros in $\mA^{(3)},\ldots,\mA^{(d)}$ as well as a theorem with the theoretical guarantees.

\begin{algorithm}[H]
\begin{algorithmic}[1]
\caption{Nonadaptive Tensor Deli for Order-$d$ tensors with no zeros in $\mA^{(3)},\ldots,\mA^{(d)}$}
\label{alg:zerofree_nonadapt}
    \State Pick any subset $S \subset [n_3]$ with $|S| = s$ indices
    \State Pick any indices $i^*_3 \in [n_3], \ldots, i^*_d \in [n_d]$.
    \State Generate a random subset of sample locations $\Omega_M \subset [n_1] \times [n_2] \times S \times \{i^*_4\} \times \cdots \times \{i^*_d\}$ by independently including each entry with probability $c_0 \dfrac{\mu_0 r\log^2(n_1+n_2)}{\min\{n_1,n_2\}}$.
    \For{$k = 3,\ldots,d$}
        \State Generate a random subset of sample locations $\Omega_k \subset [n_1] \times [n_2] \times \{i^*_3\} \times \cdots \times \{i^*_{k-1}\} \times [n_k] \times \{i^*_{k+1}\} \times \cdots \times \{i^*_d\}$ by independently including each entry with probability $c_3\dfrac{\mu_0^2 r^2\log n_k}{n_1n_2}$.
    \EndFor
    \State Sample $\calT_{i_1,\ldots,i_d}$ for $(i_1,\ldots,i_d) \in \Omega := \Omega_M \cup \bigcup_{k = 3}^{d}\Omega_k$
    \For{$i_3 \in S$}
        \State Use nuclear norm minimization to complete $\calT_{:,:,i_3,i^*_4,\ldots,i^*_d}$ using only sample locations in $\Omega_M$
    \EndFor
    \State $[\mA^{(1)},\mA^{(2)}] = \text{Jennrich}(\calT_{:,:,S,i^*_4,\ldots,i^*_d})$ 
    \For{$k = 3,\ldots,d$}
        \State $\mA^{(k)} = \text{CensoredLeastSquares}(\calT_{:,:,i^*_3,\ldots,i^*_{k-1},:,i^*_{k+1},\ldots,i^*_d}, \Omega_t,\mA^{(1)},\mA^{(2)})$
    \EndFor
    \For{$\ell = 1,\ldots,r$}
        \State $\mA^{(d)}_{:,\ell} \leftarrow \left(\prod_{k = 3}^{d-1}\mA^{(k)}_{i^*_k,\ell}\right)^{-1}\mA^{(d)}_{:,\ell}$
    \EndFor
    \State \Return $\mA^{(1)},\ldots,\mA^{(d)}$
\end{algorithmic}
\end{algorithm}

\begin{theorem}
\label{thm:zerofree_nonadapt}
Suppose that the factor matrices satisfy the following assumptions
\begin{enumerate}
    \item[(a)] $\mu(\mA^{(1)}) \le \mu_0$ and $\mu(\mA^{(2)}) \le \mu_0$
    \item[(b)] $\rank(\mA^{(1)}) = \rank(\mA^{(2)}) = r$
    \item[(c)] every $s \times r$ submatrix of $\mA^{(3)}$ has Kruskal rank $\ge 2$
    \item[(d)] for $k = 3,\ldots,d$, $\mA^{(k)}$ has no entries which are exactly $0$.
\end{enumerate}

Then, Algorithm~\ref{alg:zerofree_nonadapt} completes $\calT$ with probability at least \[1-c_1s(n_1+n_2)^{-c_2}-r\sum_{k = 3}^{d}n_k^{-(c_3-1)}.\] Furthermore, with high probability, the number of randomly drawn samples is at most \[O\left(s\mu_0r\max\{n_1,n_2\}\log^2(n_1+n_2) + \mu_0^2r^2\sum_{k = 3}^{d}n_k\log n_k\right).\] Here, $c_0, c_1, c_2 > 0$ are the constants in \cite{chen2015incoherence}, and $c_3$ can be any number larger than $1$.
\end{theorem}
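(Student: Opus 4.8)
The plan is to split the failure event of Algorithm~\ref{alg:zerofree_nonadapt} into its three stochastic stages --- nuclear-norm completion of the $s$ dense slices, Jennrich's algorithm on the completed subtensor, and solvability of the Censored Least Squares systems for modes $3,\ldots,d$ --- bound each by a union bound, and then treat the sample count separately. The deterministic algebra that stitches the recovered factors back into a CP-decomposition is the same as in Theorem~\ref{thm:3mode_nonadapt}, now applied once to the mode-$(1,2,3)$ subtensor $\calT_{:,:,S,i^*_4,\ldots,i^*_d}$ and once to each mode-$(1,2,k)$ subtensor $\calT_{:,:,i^*_3,\ldots,i^*_{k-1},:,i^*_{k+1},\ldots,i^*_d}$.

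First I would dispatch the dense block. Each slice $\calT_{:,:,i_3,i^*_4,\ldots,i^*_d}$ with $i_3\in S$ factors as $\mA^{(1)}\diag(\vd)(\mA^{(2)})^{\T}$ with $\vd_\ell=\mA^{(3)}_{i_3,\ell}\prod_{k=4}^{d}\mA^{(k)}_{i^*_k,\ell}$, which has no zero entries by (d), so the slice is exactly rank-$r$ with column- and row-space equal to those of $\mA^{(1)}$ and $\mA^{(2)}$, hence of coherence $\le\mu_0$ by (a). The result of \cite{chen2015incoherence} then recovers each slice from its $\Omega_M$ samples with failure probability $\le c_1(n_1+n_2)^{-c_2}$, and a union bound over the $s$ slices gives the first term $c_1 s(n_1+n_2)^{-c_2}$. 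On the completed subtensor $\calT_{:,:,S,i^*_4,\ldots,i^*_d}=[[\mA^{(1)},\mA^{(2)},\widetilde{\mA}^{(3)}]]$, the matrix $\widetilde{\mA}^{(3)}$ consists of the $S$-rows of $\mA^{(3)}$ with columns rescaled by the nonzero factors $\prod_{k\ge4}\mA^{(k)}_{i^*_k,\ell}$; since rescaling columns preserves Kruskal rank, assumption (c) gives $\widetilde{\mA}^{(3)}$ Kruskal rank $\ge2$, and with (b) Jennrich's algorithm recovers $\mA^{(1)},\mA^{(2)}$ up to the unavoidable column scaling and permutation with probability $1$.

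The crux is the sparse block. For fixed $k$ and slice index $i_k$, the system in Algorithm~\ref{alg:CLS} has coefficient rows equal to the sampled rows of the Khatri--Rao product $\mA^{(1)}\odot\mA^{(2)}\in\R^{n_1n_2\times r}$, so it determines $\{\mA^{(k)}_{i_k,\ell}\}_\ell$ uniquely iff those rows have rank $r$. I would first prove the coherence estimate $\mu(\mA^{(1)}\odot\mA^{(2)})\le\mu_0^2 r$: letting $U_1,U_2$ be the column spaces of $\mA^{(1)},\mA^{(2)}$, the column space $U$ of the Khatri--Rao product lies inside $U_1\otimes U_2$ with projector $P_{U_1}\otimes P_{U_2}$, so $\|\mathrm{proj}_{U}\ve_{(i_1,i_2)}\|_2^2\le\|P_{U_1}\ve_{i_1}\|_2^2\,\|P_{U_2}\ve_{i_2}\|_2^2\le(\mu_0 r/n_1)(\mu_0 r/n_2)$, i.e.\ every leverage score is at most $\mu_0^2 r^2/(n_1n_2)$. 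Taking an orthonormal basis $U$ with rows $\vu_{(i_1,i_2)}$ and forming $\mathbf{G}_{i_k}=\sum\vu_{(i_1,i_2)}\vu_{(i_1,i_2)}^{\T}$ over the pairs sampled in slice $i_k$, we get $\E\,\mathbf{G}_{i_k}=c_3\tfrac{\mu_0^2 r^2\log n_k}{n_1n_2}\,I_r$ while each summand has norm at most $R:=\mu_0^2 r^2/(n_1n_2)$; here the ratio $\lambda_{\min}(\E\,\mathbf{G}_{i_k})/R=c_3\log n_k$ is exactly what the sampling rate was engineered to produce. A matrix Chernoff bound then gives $\mathbf{G}_{i_k}\succ0$ (equivalently rank $r$) except with probability $\le r\,n_k^{-c_3}$; a union bound over the $n_k$ slices yields $r\,n_k^{-(c_3-1)}$ per mode, and over $k=3,\ldots,d$ the second failure term $r\sum_{k=3}^{d}n_k^{-(c_3-1)}$, with $c_3>1$ precisely the condition forcing each summand to vanish.

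Once every system is solvable, each recovered $\mA^{(k)}$ equals the true factor up to an explicit per-column scalar built from the Jennrich scalings of $\mA^{(1)},\mA^{(2)}$ and the products $\prod_{j\ne k,\,j\ge3}\mA^{(j)}_{i^*_j,\ell}$; as in Theorem~\ref{thm:3mode_nonadapt}, I would verify algebraically that the renormalization of $\mA^{(d)}$ in Steps~15--17 makes the product of all scalars over the $d$ modes equal to $1$ for each rank-one term, certifying $[[\mA^{(1)},\ldots,\mA^{(d)}]]=\calT$. For the sample count, $\E|\Omega_M|=c_0 s\mu_0 r\max\{n_1,n_2\}\log^2(n_1+n_2)$ (using $n_1n_2/\min\{n_1,n_2\}=\max\{n_1,n_2\}$) and $\E|\Omega_k|=c_3\mu_0^2 r^2 n_k\log n_k$, so a scalar Chernoff bound on the independent inclusion indicators shows $|\Omega|$ is within a constant factor of its mean with high probability, giving the stated bound. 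I expect the main obstacle to be the coherence-plus-matrix-Chernoff step for the Khatri--Rao product, since it simultaneously fixes the $\mu_0^2 r^2$ sampling rate, the $\log n_k$ factor, and the final probability expression.
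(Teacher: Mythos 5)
Your proposal is correct and follows essentially the same route as the paper: nuclear-norm completion of the dense slices via \cite{chen2015incoherence} with a union bound, Jennrich's algorithm justified by the fact that nonzero column rescaling preserves Kruskal rank, and solvability of the censored least-squares systems via exactly the paper's two key lemmas (the bound $\mu(\mA^{(1)} \odot \mA^{(2)}) \le \mu_0^2 r$ and a matrix Chernoff bound showing the randomly sampled rows of the Khatri--Rao product retain rank $r$ with failure probability $r n_k^{-c_3}$ per slice), followed by the same binomial/Chernoff argument for the sample count. The only difference is organizational: the paper obtains this theorem as the $z=0$, $m=1$ special case of its general Theorem~\ref{thm:non_adaptive_dust_sampling}, whereas you prove that special case directly, with identical underlying mathematics.
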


For a typical order-$d$ tensor of size $n \times \cdots \times n$, we have $s = 2$ and Algorithm~\ref{alg:zerofree_adapt} completes $\calT$ with high probability using $O(\mu_0 nr\log^2 n + (d-2)\mu_0^2nr^2\log n)$ samples. By varying the sampling probability over the tensor, we are able to reduce the sample complexity from having an $n^{d/2}$ dependence on $n$ to having a slightly worse than linear dependence on $n$. Additionally, Algorithm~\ref{alg:zerofree_nonadapt} runs in polynomial time. For a detailed discussion on the runtime of Algorithm~\ref{alg:zerofree_nonadapt} (as well as its generalization where we relax the assumption that $\mA^{(3)},\ldots,\mA^{(d)}$ have no zeros), see Appendix~\ref{sec:NonadaptiveRuntime}.

\section{General Results}
\label{sec:GeneralResults}
Now, we will consider the general case where the tensor has order-$d$, and for $k = 3,\ldots,d$, the factor matrix $\mA^{(k)}$ is allowed to have up to $zn_k$ zeros per column where $z \in [0,1)$. Essentially, our algorithms aim to complete $m$ mode-$(1,2,3)$ subtensors of size $n_1 \times n_2 \times s$ and perform Jennrich's algorithm on each one until it learns all $r$ columns of $\mA^{(1)}$ and $\mA^{(2)}$. Then, for each $k = 3,\ldots,d$, it will perform slice-by-slice censored least squares on $m$ mode-$(1,2,k)$ subtensors to learn all $r$ columns of $\mA^{(k)}$. The parameter $m$ needs to be chosen large enough to ensure that all $r$ rank-$1$ components can be learned.

\subsection{Adaptive Tensor Sandwich}

\begin{theorem}
\label{thm:adaptive_sampling}
Suppose that the factor matrices satisfy the following assumptions

\begin{enumerate}
    \item[(a)] $\mu(\mA^{(1)}) \le \mu_0$
    \item[(b)] $\rank(\mA^{(1)}) = \rank(\mA^{(2)}) = r$
    \item[(c)] every $s \times r$ submatrix of $\mA^{(3)}$ has Kruskal rank $\ge 2$
    \item[(d)] for $k = 3,\ldots,d$, each column of $\mA^{(k)}$ has at most $zn_k$ entries which are $0$, where $z \in [0,1)$.
\end{enumerate}

Then, for any positive integer $m$, Algorithm~\ref{alg:adaptive_sampling} (see Appendix~\ref{sec:GeneralAlgorithmAdaptive}) both completes $\calT$ and uses at most \[O\left(sm\mu_0 n_2r\log(r/\delta)\right) + smn_1r + mr\sum_{k = 3}^{d}n_k\] samples with probability at least $1-sm\delta-(d-2)r(1-(1-z)^{d-3})^m$.
\end{theorem}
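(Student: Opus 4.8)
The plan is to split the theorem into two independent failure modes: (i) a matrix-completion failure on one of the densely sampled slices, and (ii) a \emph{covering} failure in which some rank-one component is never exposed to Jennrich's algorithm or to the censored-least-squares step. First I would fix the structural assumptions (a)--(d) and view Algorithm~\ref{alg:adaptive_sampling} as running $m$ rounds, where each round draws fresh uniform fixed indices for the modes it holds constant, completes the $s$ Jennrich slices $\calT_{:,:,i_3,i^*_4,\ldots,i^*_d}$ ($i_3 \in S$) by the adaptive scheme of \cite{BalcanZhang2016}, runs Jennrich to recover whichever columns of $\mA^{(1)},\mA^{(2)}$ are present, and then solves the determined linear systems (Algorithm~\ref{alg:CLS}) for the visible columns of each $\mA^{(k)}$, $k=3,\ldots,d$. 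The key bookkeeping is that column $\ell$ is recoverable in a given round and mode precisely when the held-constant factor entries for that component are all nonzero.

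For the covering argument I would fix a mode $k \in \{3,\ldots,d\}$ and an index $\ell \in [r]$, and call round $j$ \emph{good for} $(k,\ell)$ when the product of the held-constant entries $\mA^{(j)}_{i^*_j,\ell}$ over the $d-3$ modes orthogonal to $\{1,2,k\}$ is nonzero. Since each column of $\mA^{(j)}$ has at most $zn_j$ zeros, a uniformly random fixed index lands on a nonzero entry with probability at least $1-z$, and independence across the $d-3$ held-constant modes gives a per-round success probability at least $(1-z)^{d-3}$; hence all $m$ rounds miss $(k,\ell)$ with probability at most $(1-(1-z)^{d-3})^m$. The point I would stress is that the Jennrich recovery of $\mA^{(1)}_{:,\ell},\mA^{(2)}_{:,\ell}$ and the mode-$3$ censored-least-squares recovery of $\mA^{(3)}_{:,\ell}$ are governed by the \emph{same} event, both holding exactly when the factors in modes $4,\ldots,d$ are nonzero for that round, so the $d-2$ modes $3,\ldots,d$ contribute $d-2$ rather than $d-1$ independent covering requirements. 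A union bound over the $(d-2)r$ pairs then yields the covering-failure probability $(d-2)r(1-(1-z)^{d-3})^m$.

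Conditioned on every component being covered, I would verify exact recovery step by step. Jennrich succeeds on each round's completed subtensor because (b) makes any sub-collection of columns of $\mA^{(1)},\mA^{(2)}$ linearly independent and (c) forces the mode-$3$ factor restricted to $S$ to have Kruskal rank $\ge 2$; the recovered columns are merged across rounds by matching directions, which is unambiguous since the $r$ columns of $\mA^{(1)}$ are linearly independent and hence pairwise nonparallel. For the completion step I would note the mild subtlety that a slice's column space $V$ is only a subspace of $U := \operatorname{colspace}(\mA^{(1)})$, so $\mu(V)$ may exceed $\mu_0$; but $\mu(V)\dim V = n_1\max_i\|\operatorname{proj}_V\ve_i\|_2^2 \le n_1\max_i\|\operatorname{proj}_U\ve_i\|_2^2 = \mu_0 r$ because $V \subseteq U$, so the per-slice guarantee of \cite{BalcanZhang2016} still costs only $O(\mu_0 n_2 r\log(r/\delta)) + n_1 r$ samples and fails with probability at most $\delta$. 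Finally, censored least squares is exactly determined because the QR-with-pivoting step selects $r$ rows $L$ on which $\mA^{(1)}\odot\mA^{(2)}$ is invertible, and any sub-collection of its columns (the visible ones) remains full column rank on $L$.

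It remains to collect the bounds. Summing the completion cost over the $sm$ slices gives $O(sm\mu_0 n_2 r\log(r/\delta)) + smn_1 r$, and each round samples $|L| = r$ fibers of length $n_k$ for every mode $k=3,\ldots,d$, contributing $mr\sum_{k=3}^{d}n_k$; a union bound over the $sm$ completion events ($sm\delta$) and the covering failure gives the stated success probability $1 - sm\delta - (d-2)r(1-(1-z)^{d-3})^m$. I expect the main obstacle to be the covering analysis itself, specifically arguing cleanly that the Jennrich and mode-$3$ events coincide so the exponent of the failure bound is $d-2$ and not $d-1$, together with ensuring that the partial Jennrich outputs from different rounds stitch together into a single consistent pair of factor matrices.
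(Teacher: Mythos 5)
Your proposal is correct and follows essentially the same route as the paper's proof: a union bound over the $sm$ slice completions, a per-column covering argument giving failure probability at most $(1-(1-z)^{d-3})^m$ per rank-one component, and the observation that the Jennrich recovery event and the mode-$3$ censored-least-squares recovery event coincide (both requiring the held-constant entries in modes $4,\ldots,d$ to be nonzero), which is precisely why the bound carries $(d-2)r$ rather than $(d-1)r$. Your two refinements --- the bound $\mu(V)\dim V \le \mu_0 r$ for the column space $V$ of a slice, and the explicit cross-round merging of Jennrich outputs using pairwise non-parallel columns --- are details the paper glosses over but do not change the argument.
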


\subsection{Nonadaptive Independent Sampling Tensor Sandwich}

\begin{theorem}
\label{thm:non_adaptive_dust_sampling}
Suppose that the factor matrices satisfy the following assumptions

\begin{enumerate}
    \item[(a)] $\mu(\mA^{(1)}) \le \mu_0$ and $\mu(\mA^{(2)}) \le \mu_0$
    \item[(b)] $\rank(\mA^{(1)}) = \rank(\mA^{(2)}) = r$
    \item[(c)] every $s \times r$ submatrix of $\mA^{(3)}$ has Kruskal rank $\ge 2$
    \item[(d)] for $k = 3,\ldots,d$, each column of $\mA^{(k)}$ has at most $zn_k$ entries which are $0$, where $z \in [0,1)$.
\end{enumerate}

Then, for any positive integer $m$, Algorithm~\ref{alg:non_adaptive_dust_sampling} (see Appendix~\ref{sec:GeneralAlgorithmNonadaptive}) completes $\calT$ with probability at least \[1-c_1sm(n_1+n_2)^{-c_2}-(d-2)r(1-(1-z)^{d-3})^m - mr\sum_{k = 3}^{d}n_k^{-(c_3-1)}.\] Furthermore, with high probability, the number of randomly drawn samples is at most \[O\left(sm\mu_0 \max\{n_1,n_2\}r\log^2(n_1+n_2) + m\mu_0^2r^2\sum_{k = 3}^{d}n_k \log n_k\right).\] Here, $c_0, c_1, c_2 > 0$ are the constants in \cite{chen2015incoherence}, and $c_3$ can be any number larger than $1$.
\end{theorem}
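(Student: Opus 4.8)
The plan is to prove this as the nonadaptive, many-rounds analog of the three simpler theorems already stated: it fuses the nuclear-norm completion and censored-least-squares guarantees underlying Theorem~\ref{thm:3mode_nonadapt} with the multi-round ``coverage despite zeros'' bookkeeping underlying Theorem~\ref{thm:adaptive_sampling}. The overall strategy is to (i) isolate three failure modes whose probabilities are exactly the three subtracted terms in the claimed success probability, (ii) show that on the complement of all three the algorithm reconstructs every factor matrix exactly, and (iii) bound the sample count by linearity of expectation followed by a Chernoff concentration step. Throughout I assume the $m$ rounds of Algorithm~\ref{alg:non_adaptive_dust_sampling} use independently and uniformly drawn fixed indices $i^*_3,\ldots,i^*_d$, which is what makes the coverage probabilities below valid.

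For the first term, the completion step, Algorithm~\ref{alg:non_adaptive_dust_sampling} completes $s$ densely sampled mode-$(1,2,3)$ slices in each of the $m$ rounds, i.e.\ $sm$ matrix completions. For each I would invoke the guarantee of \cite{chen2015incoherence} exactly as in Theorem~\ref{thm:3mode_nonadapt}: a rank-$\le r$, coherence-$\le\mu_0$ slice sampled at rate $c_0\frac{\mu_0 r\log^2(n_1+n_2)}{\min\{n_1,n_2\}}$ is recovered except with probability $c_1(n_1+n_2)^{-c_2}$, and a union bound over the $sm$ slices yields $c_1 sm(n_1+n_2)^{-c_2}$. The point requiring care is that zeros in $\mA^{(3)},\ldots,\mA^{(d)}$ make a given slice a CP-combination of only the \emph{present} components, so the completion runs against a column-subset of $\mA^{(1)},\mA^{(2)}$; I would note that the coherence of any column subset of an incoherent full-rank matrix is still controlled (orthogonal projection onto a subspace only shrinks), so the $\mu_0$ hypothesis is inherited and \cite{chen2015incoherence} still applies.

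For the second term, component coverage, is where the $z$-dependence enters. For any target mode $k\in\{3,\ldots,d\}$, component $\ell$ is present in a given round's mode-$(1,2,k)$ subtensor precisely when none of the $d-3$ fixed factor entries $\{\mA^{(j)}_{i^*_j,\ell}\}_{j\ne 1,2,k}$ vanishes; since each column of each $\mA^{(j)}$ has at most $zn_j$ zeros, this occurs with probability at least $(1-z)^{d-3}$, so the probability that $\ell$ is never present across the $m$ independent rounds is at most $(1-(1-z)^{d-3})^m$. A union bound over the $r$ components and the $d-2$ factor matrices to be recovered gives $(d-2)r(1-(1-z)^{d-3})^m$. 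On this event, for each $(k,\ell)$ some round recovers $\mA^{(k)}_{:,\ell}$ up to the known nonzero scalar $\prod_{j\ne 1,2,k}\mA^{(j)}_{i^*_j,\ell}$, which the final rescaling loop divides out; the rank and Kruskal-rank hypotheses (b),(c) restricted to the present components guarantee Jennrich correctly recovers and \emph{pairs} the corresponding columns of $\mA^{(1)},\mA^{(2)}$ in each round, and matching columns across rounds by their generically distinct mode-1 directions assembles the full $\mA^{(1)},\mA^{(2)}$.

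For the third term, determinacy of the censored-least-squares systems, for each mode $k$, round, and slice $i_k\in[n_k]$ the system in Algorithm~\ref{alg:CLS} is uniquely solvable iff the sampled rows of the Khatri--Rao product $\mA^{(1)}\odot\mA^{(2)}$ (restricted to the present components) span the component space. I would first bound the coherence of $\mA^{(1)}\odot\mA^{(2)}$ by $\mu_0^2$ (the standard submultiplicativity of Khatri--Rao coherence, which is the source of the $\mu_0^2$ in the sampling rate), then invoke a leverage-score / matrix-Chernoff row-sampling bound: sampling rows of an $n_1n_2\times r$, coherence-$\mu_0^2$, rank-$r$ matrix at rate $c_3\frac{\mu_0^2 r^2\log n_k}{n_1n_2}$ makes the sampled submatrix full column rank except with probability $\le r n_k^{-c_3}$ per slice, and union bounding over the $n_k$ slices, $d-2$ modes, and $m$ rounds gives $mr\sum_{k=3}^d n_k^{-(c_3-1)}$. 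The sample count is then routine: by linearity of expectation the dense sampling contributes $sm\,c_0\mu_0 r\max\{n_1,n_2\}\log^2(n_1+n_2)$ and the sparse sampling $m\sum_k c_3\mu_0^2 r^2 n_k\log n_k$, and a Chernoff bound as in Theorem~\ref{thm:3mode_nonadapt} keeps the realized totals within a constant factor with high probability. I expect this third step to be the main obstacle --- proving the Khatri--Rao coherence bound and converting it into a clean per-slice $r n_k^{-c_3}$ full-rank guarantee, including the interaction between the coherence of $\mA^{(1)}\odot\mA^{(2)}$ and the reduced rank on slices where some components are absent --- with the cross-round column matching of the second step a close runner-up.
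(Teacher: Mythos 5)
Your proposal is correct and follows essentially the same route as the paper's proof: a union bound over the $sm$ nuclear-norm completions via \cite{chen2015incoherence}, the $(1-z)^{d-3}$ presence/coverage argument over the $m$ rounds to get the $(d-2)r(1-(1-z)^{d-3})^m$ term, a Khatri--Rao coherence bound combined with a matrix-Chernoff row-sampling lemma to get the per-slice full-rank guarantee $rn_k^{-c_3}$ (the paper's Lemma~\ref{lem:coherence_KR} and Lemma~\ref{lem:random_subset_full_rank}), and linearity of expectation plus a Chernoff bound for the sample count. The one quantitative slip is your claim that $\mu(\mA^{(1)}\odot\mA^{(2)})\le\mu_0^2$: the correct (and the paper's) bound is $\mu(\mA^{(1)}\odot\mA^{(2)})\le\mu(\mA^{(1)})\mu(\mA^{(2)})\,r\le\mu_0^2 r$, and this extra factor of $r$ is precisely what the $r^2$ in the sparse sampling rate is there to absorb; with the corrected constant your stated per-slice failure probability $rn_k^{-c_3}$ comes out exactly as in the paper, so nothing downstream of that step changes.
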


Remark: Choosing $m \approx \dfrac{\log\left(\tfrac{(d-2)r}{\delta'}\right)}{-\log(1-(1-z)^{d-3})}$ for some small $\delta' > 0$ will keep both the failure probability and the total number of samples reasonably small.

\section{Numerical Experiments}
\label{sec:Experiments}
In this section, we show that Tensor Deli (TD) can complete tensors of simulated data as well as real-world data using both adaptive and non-adaptive sampling. Once sample complexity bounds are satisfied in either setting, TD can achieve low levels of relative error for completing tensors with good low CP-rank approximations by looking at only a small fraction of their total entries. We also demonstrate empirically that TD works for four mode tensors. In prior works such as \cite{krishnamurthy2013low}, results of this kind are given only for two-mode tensors; in \cite{liu2020tensor} experiments are only run on three-mode tensors. Furthermore, we show how using Tensor Deli to initialize a masked-alternating least squares (ALS) scheme, with even a few iterations, provides a significant improvement in completion accuracy in both simulated and real world data. We further compare adaptive TD results to \cite{krishnamurthy2013low}, and show that Tensor Deli can perform well in terms of accuracy and sample complexity while also providing a factorized tensor, and while being significantly faster in terms of runtime.\footnote{All data and code for this section are available at \url{https://github.com/cahaselby/TensorDeli}.}

\subsection{Simulated Data}

Here we present empirical results on simulated data. In all cases, the data is generated by drawing factor matrices $\mA^{(1)}, \dots, \mA^{(d)} \in \mathbb{R}^{n\times r}$ with i.i.d standard Gaussian entries and then $\ell^2$-normalizing the columns. For the simulated data the side-lengths $n$ are the same for each of the $d$ modes, and thus the tensor has $n^d$ total entries. We then have the option to weight the components using decaying weights described by a parameter $\alpha>0$, i.e. 
\begin{equation} \label{eqn:data_gen}
\calT = \sum_{\ell = 1}^{r}  \left(\frac{1}{\ell^\alpha}\right)  \va^{(1)}_{\ell} \circ \cdots \circ \va^{(d)}_{\ell}.
\end{equation}
In each experiment, the median of the errors is taken over ten independent trials. In each Tensor Deli experiment frontal slices are first selected to complete, and within these slices we sample using a budget of $m=\gamma n^2$ samples (adaptively or non-adaptively), where $\gamma\in [0,1]$ is the proportion of the total $n^2$ entries of the slice available for sampling in these first matrix completion steps. Slices are then completed using a semi-definite programming formulation of nuclear norm minimization solved via Douglas-Rachford splitting, see \cite{ocpb:16}. In this step for the case of solving the semi-definite program, convergence is declared once the primal residual, dual residual, and duality gap are all below $10^{-8}$ for each of the $s$ selected slices, or else after 10,000 iterations per slice, whichever comes first. We note that the accuracy of this matrix completion step influences numerically what is achievable in terms of overall accuracy for the completed tensor, and that the error for completing these slices is compounded in the subsequent steps (even in the absence of noise). This explains the apparent ``leveling off'' of the relative error even as sample complexity or signal-to-noise ratios increase. The number of fibers to sample which are used to estimate the remaining $d-2$ factor matrices is given by $\delta r$, where $\delta$ is an oversampling factor of size at least one.

In Figure~\ref{fig:basic_study_adapt} we show for $d=4$, $n=100$, $\alpha = 2$ and for three different ranks $r=5,10,15$ non-adaptive TD is able to reach relative errors at or lower than 1\% using less than one tenth of one percent of the total tensor entries. In Figure~\ref{fig:basic_study_adapt}, we show the median relative error for Tensor Deli and the relative error after ten iterations of masked-ALS is performed on the result, i.e. the second set of experiments depend on the first in that masked-ALS is initialized using the Tensor Deli estimates of the CP factorization, and using the same revealed entries. In this set of experiments, we perform the matrix completion step of our method using the sampling technique in \cite{Ward2015}, and nuclear norm minimization.  As is well documented now, ALS can be quite sensitive to initialization and prone to ``swamps'' where accuracy stagnates over a large number of iterations. This figure shows however that Tensor Deli can provide high quality initializations, and with only a small number of iterations we can improve the accuracy of the estimate of the completed tensor by an order of magnitude without even needing to reveal more entries. Additionally, we have ALS alone applied to the same tensor using the same revealed entries, initialized using the masked SVDs of the unfoldings of the tensor. Empirically we have observed uniform sampling at the same complexity performs the same for ALS, and in either case of sampling pattern, masked ALS alone is unable to achieve comparable relative errors.

\begin{figure}%[h]
\centering
\includegraphics[scale=0.5]{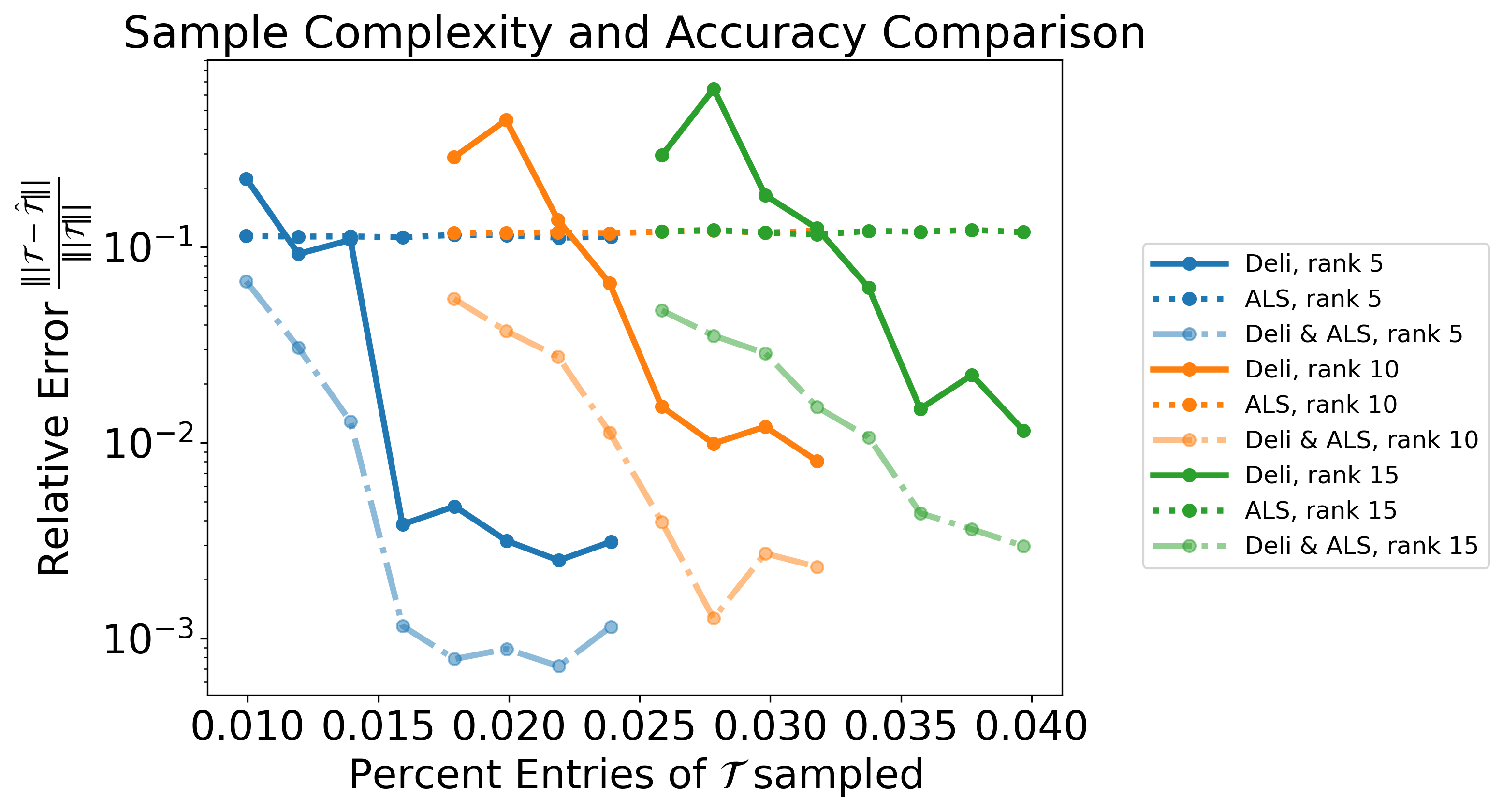}
\caption{Median relative error (log-scaled) of completed four mode tensors of varying rank as sample complexity increases without noise. Each value is the median of ten trials, $n=100$, $d=4$, $s=2, \gamma \in [0.1,0.8], \delta=8$. We compare the relative errors of adaptive Tensor Deli before and after ten iterations of masked-ALS, as well as just masked ALS alone.}
\label{fig:basic_study_adapt}
\end{figure}

In Figure~\ref{fig:basic_study_nonadapt} we show the errors for adaptive TD, as in Algorithm~\ref{alg:adaptive_sampling}, versus non-adaptive, as in Algorithm~\ref{alg:non_adaptive_dust_sampling}, strategy for each rank with the same experimental setup as in Figure~\ref{fig:basic_study_adapt}. Not surprisingly, in the case of random low-rank tensors, the density of the sampling in the slices drives the accuracy, and adapting the sampling pattern appears to have little effect - the slices themselves are already highly incoherent, so there is little to gain by adaptive sampling. As we discuss in the next section, on real world data with more structure, adaptive sampling does allow for more significant improvements.  

\begin{figure}%[h]
\centering
\includegraphics[scale=0.5]{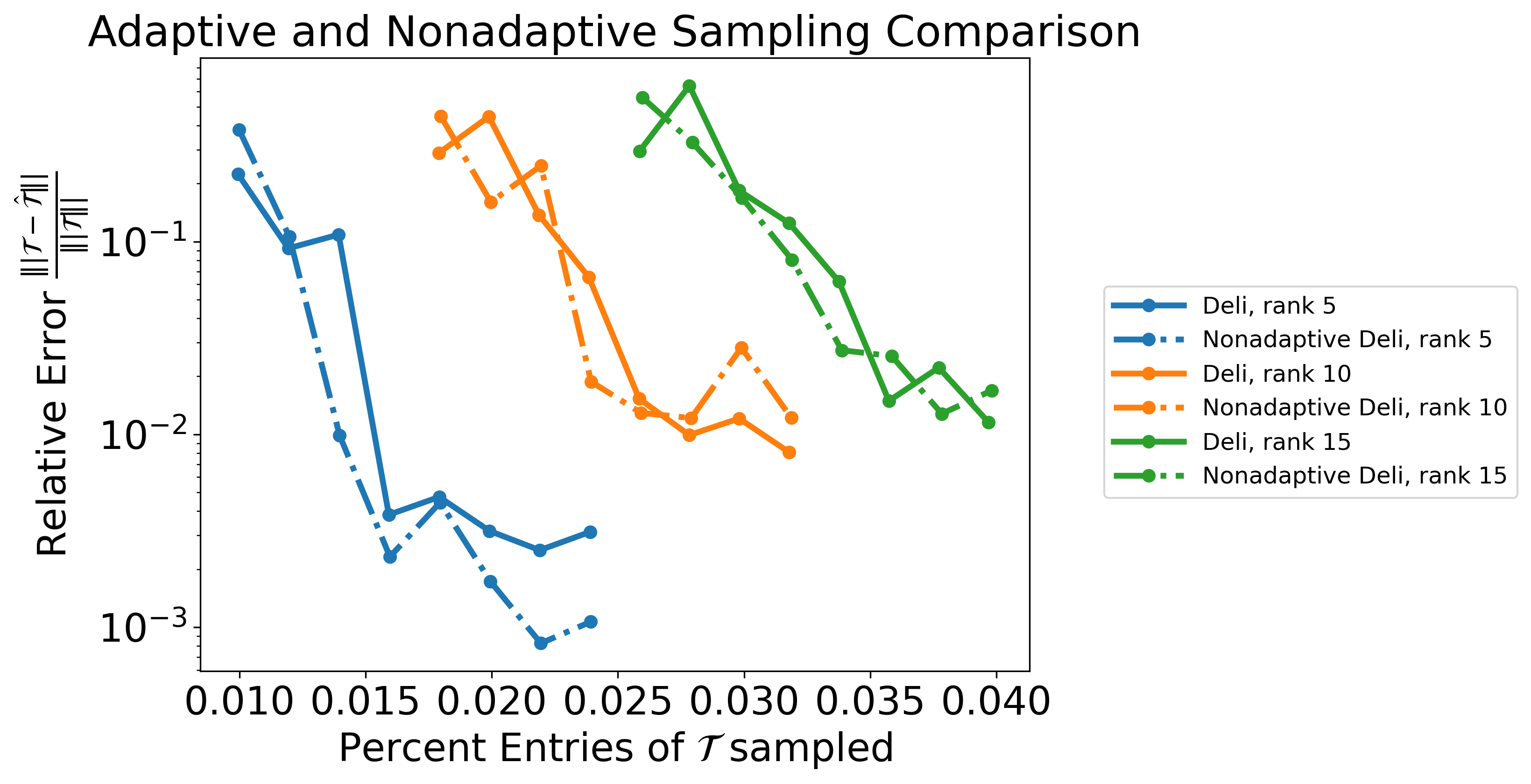}
\caption{Median relative error (log-scaled) of completed four mode tensors of varying rank as sample complexity increases without noise. Each value is the median of ten trials, $n=100$, $d=4$, $s=2,\gamma \in [0.1,0.8], \delta=8$. We show errors for both adaptive and non-adaptive TD sampling schemes.}
\label{fig:basic_study_nonadapt}
\end{figure}

In Figures~\ref{fig:ks_compare} and \ref{fig:ks_noise} we show a comparison of the method proposed in \cite{krishnamurthy2013low} (denoted KS in the figure) and adaptive Tensor Deli, with ten iterations of ALS. For our accuracy comparison in plot (a) of Figure~\ref{fig:ks_compare}, we compare the median relative error adaptive TD with KS. Plot (b) shows the mean runtime, and plot (c) has the mean sample complexity. In each plot it is the same ten, independent trials, that are averaged for each choice of side-length of three mode tensors from $n=50$ to $n=120$. In each experiment, the rank is fixed to be a tenth of the side-length, $r=0.1n$. In order to best control for the implementation of the matrix completion phase in the comparisons, Tensor Deli uses the KS algorithm to complete the densely sampled slices in these figures. This demonstrates that Tensor Deli can easily accommodate the use of other matrix completion methods, and potentially with a substantial benefit to the overall runtime, sample complexity, and accuracy. The advantages from Tensor Deli come from the fact that samples and runtime spent on completing a smaller subtensor accurately, and obtaining estimates for some of the factors, payoff later when completing the rest of the tensor. Another noteworthy phenomenon of practical importance for the KS algorithm is that, while the error does appear monotonic in terms of the fiber sampling parameter, which is the parameter that drives the overall sample complexity and runtime, this does not necessarily mean that the error is monotonic in terms of overall number of entries revealed. This is because it can be inefficient to under sample the number of entries in a fiber (or in any sub-tensor in the recursion)  when testing if fibers are in the span of the learned basis. These false negatives will result in more fibers being fully revealed which do not meaningfully expand the basis. Additionally, KS does not alone output the CP factors of the tensor it completes, which may themselves be what's of interest.

Next we compare the performance of these two adaptive algorithms in the presence of noise. In Figure~\ref{fig:ks_noise} we have a similar setup but now we fix $n=50$, $d=3$, $\alpha=1$ and vary for ranks $r=4,6,8$. Sampling parameter for fibers and faces is fixed at $0.7$ respectively for KS and Tensor Deli, and we add mean-zero i.i.d. Gaussian noise to each entry in our tensor. For each trial, the noise tensor $\mathcal{N}$ is scaled to the appropriate signal-to-noise ratio along the horizontal axis, i.e. $\text{SNR} = 20\log_{10}\tfrac{\|\calT\|}{\|\mathcal{N}\|}$. This shows that Tensor Deli performance is comparable to (or better than) KS depending on the amount of noise - however, it does this at significantly smaller proportion of entries being sampled. In Figure~\ref{fig:ks_noise}, KS ranges from 12.9\% to 80.2\% of total number of entries sampled depending on the level of noise, whereas the highest proportion sampled for Tensor Deli is 6.2\%.

\begin{figure}%[h]
\centering
\includegraphics[width=\columnwidth]{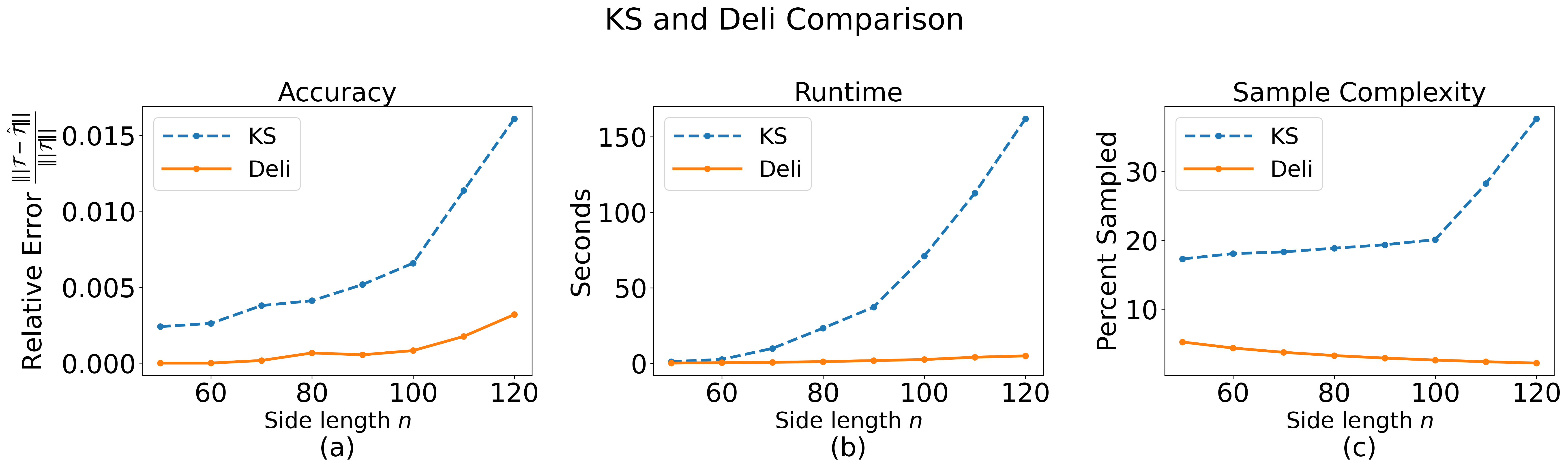}
\caption{Comparison of runtime, accuracy and sample complexity for different sized three mode tensors, $n\in[50,120]$, $d=3$, $r=0.1n$. Tensor Deli utilizes KS to complete two densely sampled slices and also performs ten iterations of ALS. This is compared with KS alone used to complete the entire tensor. For this simulated data, Tensor Deli is able to achieve faster runtimes, better accuracy, and at a lower sample complexity than KS. Here we average errors, runtime, and utilized samples over ten independent trials for each choice of $n$.}
\label{fig:ks_compare}
\end{figure}

\begin{figure}%[H]
\centering
\includegraphics[scale=0.4]{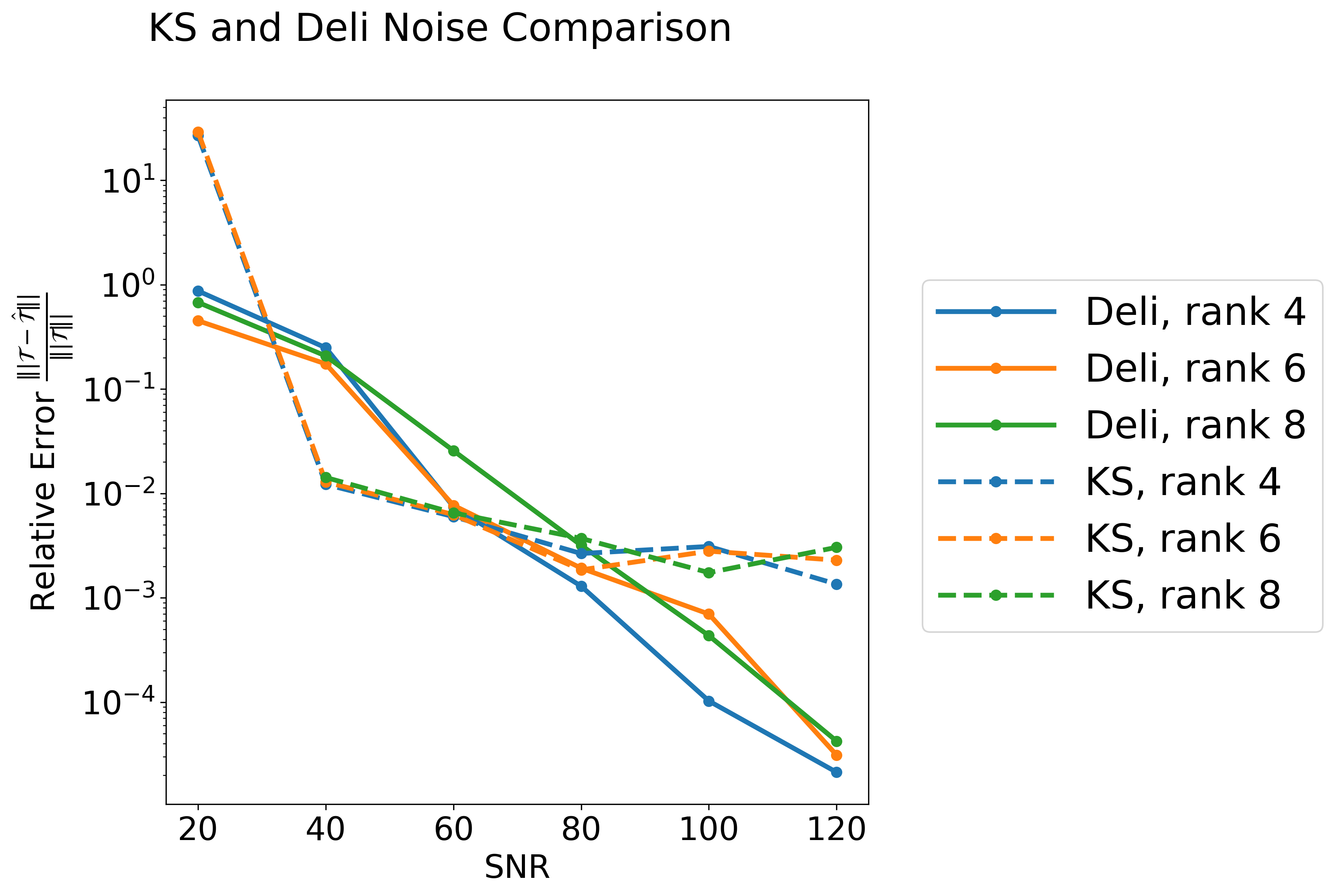}
\caption{Median relative error (log-scaled) of completed three mode tensors of varying rank with additive noise for adaptive Tensor Deli with adaptive sampling method, and KS. Each value is the median of ten trials, for Tensor Deli $n=50$, $d=3$, $s=2, \delta=8$, for KS the proportion of entries that can be sampled for a fiber is 0.7.}
\label{fig:ks_noise}
\end{figure}

%\clearpage 
\subsection{Real Data from Applications}

We also apply Tensor Deli on data-sets in the following application areas: chemo-metrics, and hyper-spectral imaging. The chemo-metrics data-set is as used in \cite{fluordata2005}. It is fluorensce measured from known analytes intended for calibration purposes. There are a total of 405 fluorophores of six different types. For each sample an Excitation Emission Matrix (EEM) was measured - i.e. fluorescence intensity is measured for different set levels of excitation wavelength and emission wavelength. The EEM of an unknown sample can be used for its identification as well as for studying other properties of interest for a given analyte. In this data-set there are 136 emission wavelengths and 19 excitation wavelengths. In the original tensor, there are five replicates per sample, however in our experiment we discarded all but the first set of replicates. As a result, the original tensor is a three mode tensor of size $405 \times 136 \times 19$. We compare the completed tensor for five different choices of rank, $r=11,15,19$ to the original data. We set $s=4$, and thus complete four frontal slices. We fix frontal slices where the third mode is at the equally-spaced middle indices $5,9,13,17$ across all experiments in order to facilitate better comparison. In Figure~\ref{fig:big_fluor_rank}, we show the recovered tensor for a representative frontal slice at each of the different ranks. Below this in the same figure, we have fixed a representative lateral slice, which corresponds to a completed EEM for a particular sample of an analyte at the different ranks. In all cases, the total number of entries revealed is between 10 and 11\% of the tensor's entries, we performed adaptive sampling and ten iterations of masked-ALS on the resulting CP estimate using the same revealed samples as for TD, and the sampling parameters $\gamma$ and $\delta$ are set to $0.5$ and $10$, respectively.

Furthermore, in Figure~\ref{fig:fluor_adapt} we show evidence that indeed, in practical applications, there is quite clearly a benefit to sampling adaptively in terms of the trade off between accuracy and sample complexity. In this figure, for a fixed rank of $15$ and a fixed sample complexity using four slices and the sampling parameters of $0.5$ and $10$ for $\gamma$ and $\delta$, using the same lateral slice as before in Figure~\ref{fig:big_fluor_rank}, we show the completed EEM and corresponding relative error for this slice using the adaptive Tensor Deli (Algorithm~\ref{alg:adaptive_sampling}), the adaptive Tensor Deli with ten iterations of masked-ALS, and non-adaptive Tensor Deli (Algorithm~\ref{alg:non_adaptive_dust_sampling}). In this case, the additional ALS iterations show only modest improvement to the overall estimates, however the adaptive scheme's accuracy has a much larger effect. This is likely do to the fact the coherence of the data and other parameters related to the problem are far from ideal, especially when compared to the simulated case from earlier.  

\begin{figure}%[H]
\centering
\includegraphics[scale = 0.32]{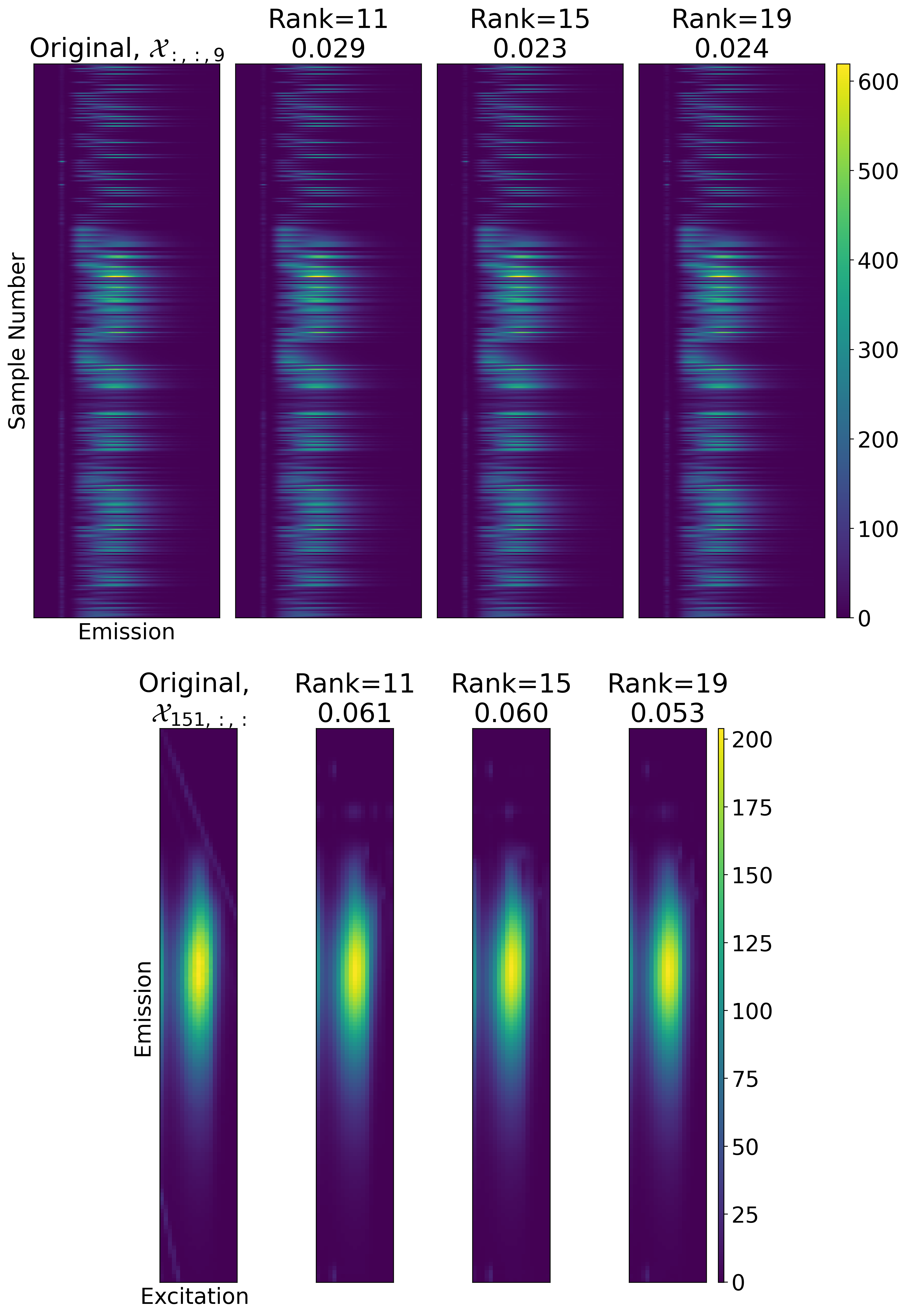}
\caption{Slices of a completed tensor for fluorence data. The top row shows the original frontal slice $\mathcal{X}_{:,:,9}$ and the same slice completed at ranks $11,15,17$ with their corresponding relative errors. The bottom row shows a lateral slice $\mathcal{X}_{151,:,:}$, which corresponds then to the EEM for the single analyte number 151 in the dataset.}
\label{fig:big_fluor_rank}
\end{figure}

\begin{figure}%[H]
\centering
\includegraphics[scale = 0.35]{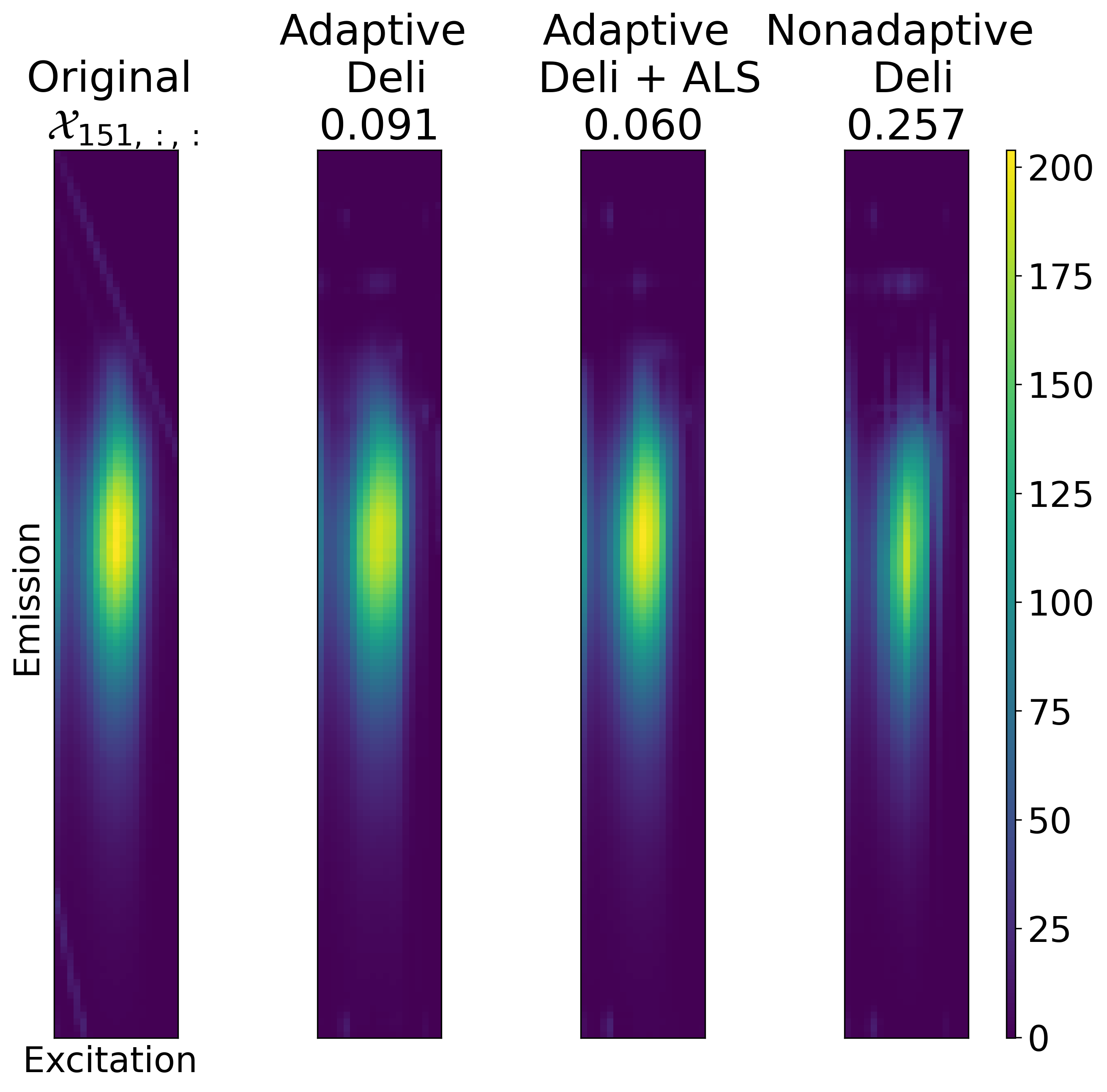}
\caption{Lateral slice corresponding to the EEM for analyte number 151 in the fluorence data is compared with the corresponding slice completed from adaptive, adaptive with extra iterations of ALS, and non-adaptive Tensor Deli. Relative error for the slice is listed for each method. Rank is fixed at $15$ for each, sample complexity is fixed at about 10\% of entries using sampling parameters of $0.5$ and $10$ for $\gamma$ and $\delta$ respectively.}
\label{fig:fluor_adapt}
\end{figure}

The next application is hyperspectral imaging. The data is as found in \cite{pinesdata2015}. The hyperspectral sensor data was acquired in June 1992 and consists of aerial images of an approximately two mile by to mile Purdue University Agronomy farm (originally intended for soils research). The data consists of 200 images at different wave lengths that are 145 by 145 pixels each. We thus form a three mode, $145 \times 145 \times 200$ data tensor. We complete the tensor using $s=9$, where we have fixed these frontal slices at indices $20,40,60,80,100,120,140,160,180$ across all experiments to facilitate comparison. Shown in Figure~\ref{fig:pine_study}, we have completed the tensor for ranks $r=30,40,50,60$ and displayed a fixed, representative frontal slice at index 48. The first row consists of data completed using Tensor Deli with adaptive sampling, with parameters $\gamma=0.7$ and $\delta=10$ for a total sample budget that is about 4.5\% of the total entries. The second row then performs ten iterations of masked-ALS using the initialization of the top row and the same revealed entries. The last row consists of ten iterations masked-ALS, which is initialized using the SVD of the flattenings, e.g. see \cite{Tomasi2005PARAFACAM}, and uses the same revealed entries as the other experiments at this rank.

Empirically, we observe the hyperspectral image dataset is imperfectly approximated by a low-rank CP decomposition to begin with. In Figure~\ref{fig:pine_study}, we see Tensor Deli alone performs in terms of global relative error certainly no better than masked-ALS alone on this data. However, it provides a superior initialization to ALS, as we can see in the second row. Moreover, we observe there is a qualitative difference in the completed slices and their errors. In the ALS alone case, the error appears achieved by a sort of local averaging (i.e., blurring) of the intensity of the pixels, whereas Tensor Deli does capture contrasts and local features more distinctly, and the errors are largest on rows and columns it ``misses'' or imperfectly completes. This can be seen by looking at a heat map of the error slice by slice. Looking at the middle row, we can see there is a distinct advantage in combining these two types of methods to achieve the best sort of completion for this dataset. 

\begin{figure}
\centering
\includegraphics[scale=0.375]{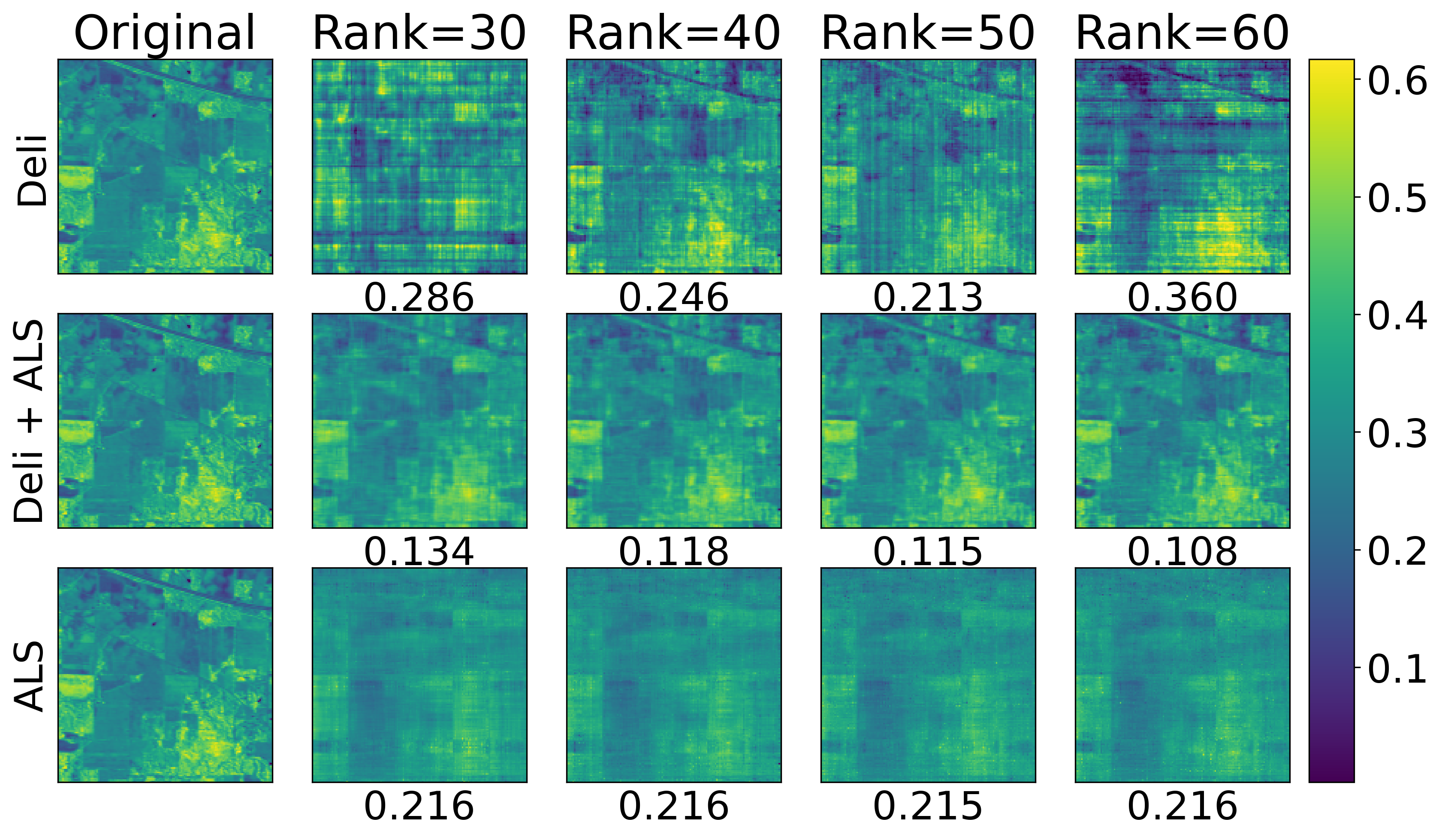}
\caption{Slices of a completed tensor for hyperspectral data. The first column shows the original frontal slice $\mathcal{X}_{:,:,48}$ and the same slice completed at ranks $30,40,50,60$ with the corresponding relative error below for three different methods. The top row is adaptive Tensor Deli with sampling parameters $\gamma=0.7$, $\delta=10$, the middle row shows the slice after ten iterations of ALS using the Deli initialization and the last row shows SVD initialized masked-ALS using the same revealed entries as the experiments above it.}
\label{fig:pine_study}
\end{figure}
%\clearpage

\appendix

\section{Full Algorithms with tolerance for rank-deficient subtensors}

\subsection{Tensor Sandwich: Adaptive}
\label{sec:GeneralAlgorithmAdaptive}
\begin{algorithm}[H]
\begin{algorithmic}[1]
\caption{Adaptive Tensor Deli for Order-$d$ tensors}
\label{alg:adaptive_sampling}
    \State Pick any subset $S \subset [n_3]$ with $|S| = s$ indices
    \For{k = 3,\ldots,d}
        \State Generate a subset $Z_k \subset [n_3] \times \cdots \times [n_{k-1}] \times [n_{k+1}] \times \cdots \times [n_d]$ with $|Z_k| = m$ elements chosen uniformly at random without replacement.
    \EndFor
    \For{$(i_4,\ldots,i_d) \in Z_3$}
        \For{$i_3 \in S$}
            \State Use algorithm in \cite{BalcanZhang2016} to sample and complete $\calT_{:,:,i_3,i_4,\ldots,i_d}$.
        \EndFor
        \State Use Jennrich's algorithm on each completed subtensor $\calT_{:, :, S, i_4, \ldots, i_d}$ to recover the factor pairs $\mA^{(1)}_{:,\ell} \circ \mA^{(2)}_{:,\ell}$ for $\ell$ such that $\prod_{k = 4}^{d}\mA^{(k)}_{i_k,\ell} \neq 0$.
    \EndFor
    \State Perform QR decomposition with pivoting on $(\mA^{(1)} \odot \mA^{(2)})^T$ to identify a subset of $r$ linearly independent rows of $\mA^{(1)} \odot \mA^{(2)}$. Let $L \subset [n_1] \times [n_2]$ be the subset of indices $(i_1,i_2)$ corresponding to these rows.
    \For{$k = 3,\ldots,d$}
        \State $R = [r]$
        \For{$(i_3,\ldots,i_{k-1},i_{k+1},\ldots,i_d) \in Z_k$}
            \For{$i_k \in [n_k]$}
                \For{$(i_1,i_2) \in L$}
                    \State Sample $\calT_{i_1,i_2,i_3,\ldots,i_d}$
                \EndFor
            \EndFor   
            \State $\mB = \text{CensoredLeastSquares}(\calT_{:,:,i_3,\ldots,i_{k-1},:,i_{k+1},\ldots,i_d},L \times [n_k],\mA^{(1)},\mA^{(2)})$
            \If{$k \neq d$}
                \State $\mA^{(k)}_{:,R} = \mB_{:,R}$
            \Else
                \State $\mA^{(k)}_{:,R} = \left(\prod_{t = 3}^{d-1}\mA^{(t)}_{i_t,\ell}\right)^{-1}\mB_{:,R}$
            \EndIf
            \State $R = \{\ell : \mA^{(k)}_{:,\ell} = \vct{0}\}$
        \EndFor
    \EndFor
    \State \Return $\mA^{(1)},\ldots,\mA^{(d)}$
\end{algorithmic}
\end{algorithm}

\subsection{Proof of Theorem~\ref{thm:adaptive_sampling}}

\subsubsection{Completing $sm$ slices}
In lines 4-6 above, we use the algorithm in \cite{BalcanZhang2016} to sample and complete $|S| \cdot |Z_3| = sm$ slices of the tensor which each have dimensions $n_1 \times n_2$. Since each slice $\calT_{:,:,i_3,\ldots,i_d}$ has $\text{col-span}(\calT_{:,:,i_3,\ldots,i_d}) \subseteq \text{col-span}(\mA^{(1)})$ and $\mu(\mA^{(1)}) \le \mu_0$, each slice satisfies the assumptions of \cite{BalcanZhang2016}. Hence, for each slice $\calT_{:,:,i_3,\ldots,i_d}, i_3 \in S, (i_4,\ldots,i_d) \in Z_3$, with probability $1-\delta$, Algorithm 2 in \cite{BalcanZhang2016} uses at most $O(\mu_0 n_2r\log(r/\delta))+n_1r$ samples and completes the slice. By taking a simple union bound over all $sm$ slices, we have that with probability at least $1-sm\delta$, all $sm$ slices are completed with at most $O(sm\mu_0 n_2r\log(r/\delta))+smn_1r$ samples. For the rest of the proof, we will assume that these $sm$ slices were successfully completed with the specified number of samples.

\subsubsection{Using Jennrich's algorithm on $m$ subtensors to learn $\mA^{(1)}$ and $\mA^{(2)}$}
\label{sec:nonadaptive_proof_Jennrich}
In lines 6 and 9, we use Jennrich's algorithm \cite{moitra2018algorithmic} on $|Z_3| = m$ mode-$(1,2,3)$ subtensors of size $n_1 \times n_2 \times s$. Consider the subtensor $\calT_{:,:,S,i_4,\ldots,i_d}$ for some $(i_4,\ldots,i_d) \in Z_3$. Let \[\mC^{(3)} = \begin{bmatrix}(\mA^{(4)}_{i_4,1} \cdots \mA^{(d)}_{i_d,1})\mA^{(3)}_{:,1} & \cdots & (\mA^{(4)}_{i_4,r} \cdots \mA^{(d)}_{i_d,r})\mA^{(3)}_{:,r}\end{bmatrix},\] i.e., $\mA^{(3)}$, but with each column rescaled. Also, let $Q = \left\{\ell : \mA^{(4)}_{i_4,\ell} \cdots \mA^{(d)}_{i_d,\ell} \neq 0\right\}$. Then, we can write 

\begin{align*}
\calT_{:,:,S,i_4,\ldots,i_d} &= \sum_{\ell = 1}^{r}\mA^{(1)}_{:,\ell} \circ \mA^{(2)}_{:,\ell} \circ \mA^{(3)}_{S,\ell} \cdot (\mA^{(4)}_{i_4,\ell} \cdots \mA^{(d)}_{i_d,\ell}) 
\\
&= \sum_{\ell \in Q}\mA^{(1)}_{:,\ell} \circ \mA^{(2)}_{:,\ell} \circ \mA^{(3)}_{S,\ell} \cdot (\mA^{(4)}_{i_4,\ell} \cdots \mA^{(d)}_{i_d,\ell})
\\
&= \sum_{\ell \in Q}\mA^{(1)}_{:,\ell} \circ \mA^{(2)}_{:,\ell} \circ \mC^{(3)}_{S,\ell}
\\
&= \left[\left[\mA^{(1)}_{:,Q},\mA^{(2)}_{:,Q},\mC^{(3)}_{S,Q}\right]\right]
\end{align*}
\\
Since $\mA^{(1)}$ and $\mA^{(2)}$ have full column-rank, so do $\mA^{(1)}_{:,Q}$ and $\mA^{(2)}_{:,Q}$. Also, by assumption, $\mA^{(3)}_{S,:}$ has Kruskal-rank $\ge 2$, i.e. no column of $\mA^{(3)}_{S,:}$ is a scalar multiple of another column. Hence, no column of $\mA^{(3)}_{S,Q}$ is a scalar multiple of another column. Rescaling each column by a non-zero constant doesn't change this, so no column of $\mC^{(3)}_{S,Q}$ is a scalar multiple of another column. Since $\mA^{(1)}_{:,Q}$ and $\mA^{(2)}_{:,Q}$ have full column rank and no column of $\mC^{(3)}_{S,Q}$ is a scalar multiple of another column, the subtensor $\calT_{:,:,S,i_4,\ldots,i_d}$ satisfies the conditions for Jennrich's algorithm to recover $\mA^{(1)}_{:,Q}$ and $\mA^{(2)}_{:,Q}$, i.e. the columns $\mA^{(1)}_{:,\ell}$ and $\mA^{(2)}_{:,\ell}$ for indices $\ell$ such that $\mA^{(4)}_{i_4,\ell} \cdots \mA^{(d)}_{i_d,\ell} \neq 0$. 

To bound the probability that one of the $r$ columns is not recovered, note that each column of $\mA^{(k)}$ has at most $zn_k$ zeros, and each $(i_4,\ldots,i_d) \in Z_3$ is chosen uniformly at random from $[n_4] \times \cdots \times [n_d]$. Thus, for each $\ell \in [r]$ and each $(i_4,\ldots,i_d) \in Z_3$ \[\P\left\{\mA^{(4)}_{i_4,\ell} \cdots \mA^{(d)}_{i_d,\ell} \neq 0\right\} = \P\left\{\mA^{(4)}_{i_4,\ell} \neq 0 \wedge \cdots \wedge \mA^{(d)}_{i_d,\ell} \neq 0\right\} \ge \dfrac{(n_4-zn_4) \cdots (n_d-zn_d)}{n_4 \cdots n_d} = (1-z)^{d-3}.\] Then, since the elements of $Z_3$ are drawn without replacement, we have that for each $\ell \in [r]$, the probability that the $\ell$-th column isn't recovered is at most \[\P\left\{\mA^{(4)}_{i_4,\ell} \cdots \mA^{(d)}_{i_d,\ell} = 0 \ \forall (i_4,\ldots,i_d) \in Z_3\right\} = \prod_{(i_4,\ldots,i_d) \in Z_3}\P\left\{\mA^{(4)}_{i_4,\ell} \cdots \mA^{(d)}_{i_d,\ell} = 0\right\} \le (1-(1-z)^{d-3})^m.\] With a simple union bound over $\ell \in [r]$, we have that all columns of $\mA^{(1)}$ and $\mA^{(2)}$ are recovered and paired correctly with probability at least $1-r(1-(1-z)^{d-3})^m$. For the rest of the proof, we will assume that all $r$ columns of $\mA^{(1)}$ and $\mA^{(2)}$ were recovered and paired correctly. 

%There are $|Z_3| = (d-3)z+1$ tuples in $Z_3$. Since each column of $\mA^{(4)},\ldots,\mA^{(d)}$ and for each $\ell \in [r]$ and each of the $d-3$ modes $k = 4,\ldots,d$, there are at most $z$ indices $j$ for which $\mA^{(k)}_{\vi^{(k)}_j,\ell} = 0$. Hence, for each $\ell \in [r]$, there is at least $((d-3)z+1)-(d-3)z = 1$ index $j$ such that $\mA^{(k)}_{\vi^{(k)}_j,\ell} \neq 0$ for $k = 4,\ldots,d$, and thus, $\mA^{(4)}_{\vi^{(4)}_j,\ell} \cdots \mA^{(d)}_{\vi^{(d)}_j,\ell} \neq 0$. Hence, each of the $r$ columns of $\mA^{(1)}$ and $\mA^{(2)}$ are recovered and paired after running Jennrich's algorithm on all $(d-3)z+1$ subtensors. 

\subsubsection{Using slice-by-slice censored least squares to learn $\mA^{(3)},\ldots,\mA^{(d)}$}
\label{sec:nonadaptive_proof_CLS}
For each mode $k = 3,\ldots,d$, we use Algorithm~\ref{alg:CLS} on $m$ mode-$(1,2,k)$ subtensors of size $n_1 \times n_2 \times n_k$. Consider the subtensor $\calT_{:,:,i_3,\ldots,i_{k-1},:,i_{k+1},\ldots,i_d}$ for some $(i_3,\ldots,i_{k-1},i_{k+1},\ldots,i_d) \in Z_k$. Let \[\mC^{(k)} = \begin{bmatrix}(\mA^{(3)}_{i_3,1} \cdots \mA^{(k-1)}_{i_{k-1},1}\mA^{(k+1)}_{i_{k+1},1}\cdots \mA^{(d)}_{i_d,1})\mA^{(k)}_{:,1} & \cdots & (\mA^{(3)}_{i_3,r} \cdots \mA^{(k-1)}_{i_{k-1},r}\mA^{(k+1)}_{i_{k+1},r}\cdots \mA^{(d)}_{i_d,r})\mA^{(k)}_{:,r}\end{bmatrix},\] i.e., $\mA^{(k)}$, but with each column rescaled. Then, we can write 

\begin{align*}
\calT_{:,:,i_3,\ldots,i_{k-1},:,i_{k+1},\ldots,i_d} &= \sum_{\ell = 1}^{r}\mA^{(1)}_{:,\ell} \circ \mA^{(2)}_{:,\ell} \circ \mA^{(k)}_{:,\ell} \cdot (\mA^{(3)}_{i_3,\ell} \cdots \mA^{(k-1)}_{i_{k-1},\ell}\mA^{(k+1)}_{i_{k+1},\ell}\cdots \mA^{(d)}_{i_d,\ell}) 
\\
&= \sum_{\ell = 1}^{r}\mA^{(1)}_{:,\ell} \circ \mA^{(2)}_{:,\ell} \circ \mC^{(k)}_{:,\ell}
\\
&= \left[\left[\mA^{(1)},\mA^{(2)},\mC^{(k)}\right]\right]
\end{align*} 

Because we sampled each mode-$(1,2)$ slice of $\calT_{:,:,i_3,\ldots,i_{k-1},:,i_{k+1},\ldots,i_d}$ in locations $L \subset [n_1] \times [n_2]$ corresponding to $r$ linearly independent rows of $\mA^{(1)} \odot \mA^{(2)}$, the systems of equations that Algorithm~\ref{alg:CLS} uses to solve for each row of $\mA^{(k)}$ is full-rank, and thus, in line 15, Algorithm~\ref{alg:CLS} returns $\mC^{(k)}$, i.e. $\mA^{(k)}$, but with the $\ell$-th column rescaled. In line 17, we store all the columns which were rescaled by a non-zero factor, i.e. the ones for which $\mA^{(3)}_{i_3,\ell}\cdots\mA^{(k-1)}_{i_{k-1},\ell}\mA^{(k+1)}_{i_{k+1},\ell}\cdots\mA^{(d)}_{i_d,\ell} \neq 0$. In line 20, we update the set of columns that have not been recovered. Hence, after the end of each iteration of the for loop in line 9, we have recovered all columns (up to a non-zero scale factor) for which $\mA^{(3)}_{i_3,\ell}\cdots\mA^{(k-1)}_{i_{k-1},\ell}\mA^{(k+1)}_{i_{k+1},\ell}\cdots\mA^{(d)}_{i_d,\ell} \neq 0$ for at least one tuple $(i_3,\ldots,i_{k-1},i_{k+1},\ldots,i_d) \in Z_k$. With similar logic as in the previous subsection, after running Algorithm~\ref{alg:CLS} on the $|Z_k| = m$ mode-$(1,2,k)$ subtensors, the probability that all of the columns of $\mA^{(k)}$ are recovered up to a scale factor is at least $1-r(1-(1-z)^{d-3})^m$. 

For the last mode $(k = d)$, the same thing happens, except in line-19, we scale each column of the output of Algorithm~\ref{alg:CLS} by the appropriate scale factor based on the factor matrices $\mA^{(3)},\ldots,\mA^{(d-1)}$ that have already been recovered.

By using a simple union bound over the $d-3$ modes $k = 3,\ldots,d$, we have that all columns of the factor matrices $\mA^{(3)},\ldots,\mA^{(d)}$ are recovered with probability at least $1-(d-3)r(1-(1-z)^{d-3})^m$. Furthermore, the total number of samples taken from each mode-$(1,2,k)$ subtensor $\calT_{:,:,i_3,\ldots,i_{k-1},:,i_{k+1},\ldots,i_d}$ is $|L \times [n_k]| = |L| \cdot |[n_k]| = rn_k$, and for each mode $k = 3,\ldots,d$, we sample $|Z_k| = m$ such subtensors. So, the total number of samples used in this step is $\sum_{k = 3}^{d}mrn_k$.

\subsubsection{Putting it all together}

The total probability of our algorithm failing is the sum of the probabilities of (i) failing to complete one of the $sm$ mode-$(1,2)$ slices, (ii) failing to learn a column of $\mA^{(1)}$ and $\mA^{(2)}$, and (iii) failing to learn a column of one of $\mA^{(3)},\ldots,\mA^{(d)}$, which is \[sm\delta + r(1-(1-z)^{d-3})^m + (d-3)r(1-(1-z)^{d-3})^m = sm\delta + (d-2)r(1-(1-z)^{d-3})^m.\] 

Furthermore, the total number of samples required is the sum of the number of samples used in the slice completion and censored least squares steps, which is \[O(sm\mu_0 n_2 r \log(r/\delta))+smn_1r + mr\sum_{k = 3}^{d}n_k.\]

\subsection{Runtime of Algorithm~\ref{alg:adaptive_sampling}}
\label{sec:AdaptiveRuntime}
In this section, we outline the computational complexity of the steps of Algorithm~\ref{alg:adaptive_sampling}. 

{\bf Sample and complete $sm$ mode-$(1,2)$ slices}: In the noise-free setting, the matrix completion algorithm in \cite{BalcanZhang2016} works by iterating through the matrix $\mM \in \R^{n_1 \times n_2}$ one column at a time and attempting to learn a basis $\mB$ for the $r$-dimensional columnspace. Initially, the basis is empty. For each column $\mM_{:,i}$, the algorithm samples a random subset $\Lambda \subset [n_1]$ with $|\Lambda| = w$ entries (where $w = \min\{O(\mu_0r\log(r/\delta)),n_1\}$) and checks if $\mM_{\Lambda,i} \in \text{col}(\mB_{\Lambda},:)$, i.e., $\mM_{\Lambda,i} = \mB_{\Lambda,:}\mB_{\Lambda,:}^{\dagger}\mM_{\Lambda,i}$. If yes, the algorithm assumes that $\mM_{:,i} \in \text{col}(\mB)$ and completes the column using the formula $\mM_{:,i} = \mB\mB_{\Lambda}^{\dagger}\mM_{\Lambda,i}$. Otherwise, the algorithm samples the rest of the column $\mM_{:,i}$ and concatenates it to $\mB$, and then picks a new random subset $\Lambda \subset [n_1]$. The cost dominating steps of this algorithm are (i) updating $\mB$, $\Lambda$, $\mB_{\Lambda,:}^{\dagger}$ a total $r$ times which costs $O(r^2w)$ operations per update and (ii) computing $\mB_{\Lambda,:}\mB_{\Lambda,:}^{\dagger}\mM_{\Lambda,i}$ and/or $\mB\mB_{\Lambda,:}^{\dagger}\mM_{\Lambda,i}$ for every column which costs $O(rw)+O(rn_1) = O(rn_1)$ operations per column. Thus, the total cost of completing one slice is $n_2 \cdot O(rn_1) + r \cdot O(r^2w) = O(rn_1n_2 + r^3w)$ operations.

Hence, the cost of using the matrix completion algorithm in \cite{BalcanZhang2016} on $sm$ slices is $O(smrn_1n_2 + smr^3w) = O(smrn_1n_2 + smr^3\min\{\mu_0r\log(r/\delta),n_1\})$ operations.

{\bf Use Jennrich's Algorithm on $m$ mode-$(1,2,3)$ subtensors of size $n_1 \times n_2 \times s$}: Performing Jennrich's algorithm on a single tensor $\calX \in \R^{n_1 \times n_2 \times s}$ with CP-rank $r \le \min\{n_1,n_2\}$ requires: (i) $O(sn_1n_2)$ operations to compute the random linear combinations of the slices $\mX_{\vu} = \sum_{i_3 = 1}^{s}\vu_{i_3}\calX_{:,:,i_3}$ and $\mX_{\vv} = \sum_{i_3 = 1}^{s}\vv_{i_3}\calX_{:,:,i_3}$, (ii) $O(rn_1n_2)$ operations to compute the economical SVDs of the rank-$r$ matrices $\mX_{\vu}$ and $\mX_{\vv}$, (iii) $O(r(n_1+n_2)^2)$ operations to use these SVDs to compute $\mX_{\vu}\mX_{\vv}^{\dagger}$ and $\mX_{\vv}\mX_{\vu}^{\dagger}$, (iv) $O(r(n_1^2+n_2^2))$ operations to compute the $r$ eigenvalues and eigenvectors of $\mX_{\vu}\mX_{\vv}^{\dagger}$ and $\mX_{\vv}\mX_{\vu}^{\dagger}$. Finally, pairing the $r$ eigenvalues of $\mX_{\vu}\mX_{\vv}^{\dagger}$ and $\mX_{\vv}\mX_{\vu}^{\dagger}$ takes $O(r^2)$ operations. Hence, the total cost of performing Jennrich's algorithm on a single $n_1 \times n_2 \times s$ tensor is $O(r(n_1+n_2)^2+sn_1n_2)$. Therefore, the total cost of performing Jennrich's algorithm on $m$ mode-$(1,2,3)$ subtensors of size $n_1 \times n_2 \times s$ is $O(mr(n_1+n_2)^2+smn_1n_2)$.

{\bf Performing QR decomposition on $(\mA^{(1)} \odot \mA^{(2)})^T$}: Computing the Khatri-Rao product $\mA^{(1)} \odot \mA^{(2)}$ takes $O(rn_1n_2)$ operations and performing a QR decomposition to identify a subset $L' \subset [n_1n_2]$ of $r$ linearly independent rows corresponding to a subset $L \subset [n_1] \times [n_2]$ of indices $(i_1,i_2)$ takes $O(r^2n_1n_2)$ operations. So the total cost of this step is $O(rn_1n_2) + O(r^2n_1n_2) = O(r^2n_1n_2)$ operations.

{\bf Using Slice-by-Slice Censored Least Squares on $m$ mode-$(1,2,k)$ subtensors}: Inverting the $r \times r$ submatrix $(\mA^{(1)} \odot \mA^{(2)})_{L',:}$ once and storing it takes $O(r^3)$ operations. For each slice of a mode-$(1,2,k)$ subtensor, we have $r$ samples whose $(i_1,i_2)$ coordinates are from the same subset $L$. Hence, the $r \times r$ system of equations for every slice of all subtensors is $(\mA^{(1)} \odot \mA^{(2)})_{L',:}$. Since this inverse was already stored, solving this system for one slice of a mode-$(1,2,k)$ subtensor takes $O(r^2)$ operations. So the total number of operations for performing slice-by-slice censored least squares on one mode-$(1,2,k)$ subtensor is $n_k \cdot O(r^2) = O(r^2n_k)$. Therefore, $O(mr^2n_k)$ operations are needed to perform slice-by-slice censored least squares on $m$ mode-$(1,2,k)$ subtensors. Doing this for modes $k = 3,\ldots,d$ thus takes $O(mr^2\sum_{k = 3}^{d}n_k)$ operations. Thus, the total cost of inverting $(\mA^{(1)} \odot \mA^{(2)})_{L',:}$ and performing slice-by-slice censored least squares on $m$ mode-$(1,2,k)$ tensors for $k = 3,\ldots,d$ is $O(r^3 + mr^2\sum_{k = 3}^{d}n_k)$

Hence, the total number of operations required by Algorithm~\ref{alg:adaptive_sampling} (after discarding lower order terms) is \[O\left(r^2n_1n_2 + mr(n_1+n_2)^2+smrn_1n_2+mr^2\sum_{k = 3}^{d}n_k + smr^3\min\{\mu_0r\log(r/\delta),n_1\}\right).\] In the specific case where $s = O(1)$, $m = O(1)$, and $n_1 = n_2 = n_3 = \cdots = n_d = n$, the number of operations simplifies to $O(n^2r^2 + (d-2)nr^2)$. 

\pagebreak 

\subsection{Tensor Sandwich: Nonadaptive Independent Sampling}
\label{sec:GeneralAlgorithmNonadaptive}
\begin{algorithm}[ht]
\begin{algorithmic}[1]
\caption{Nonadaptive Tensor Deli for Order-$d$ tensors}
\label{alg:non_adaptive_dust_sampling}
    \State Pick any subset $S \subset [n_3]$ with $|S| = s$ indices
    \For{k = 3,\ldots,d}
        \State Generate a subset $Z_k \subset [n_3] \times \cdots \times [n_{k-1}] \times [n_{k+1}] \times \cdots \times [n_d]$ with $|Z_k| = m$ elements chosen uniformly at random without replacement.
    \EndFor
    \State Generate a random subset of sample locations $\Omega_M \subset [n_1] \times [n_2] \times S \times Z_3$ by independently including each entry with probability $c_0\dfrac{\mu_0 r\log^2(n_1+n_2)}{\min\{n_1,n_2\}}$.
    \For{$k = 3,\ldots,d$}
        \State Generate a random subset of sample locations $\Omega_k \subset [n_1] \times [n_2] \times [n_k] \times Z_k$ (slight misuse of notation) by independently including each entry with probability $c_3\dfrac{\mu_0^2 r^2\log n_k}{n_1n_2}$.
    \EndFor
    \For{$(i_1,\ldots,i_d) \in \Omega := \Omega_M \cup \bigcup_{k = 3}^{d}\Omega_k$}
        \State Sample $\calT_{i_1,\ldots,i_d}$
    \EndFor
    \For{$(i_4,\ldots,i_d) \in Z_3$}
        \For{$i_3 \in S$}
            \State Use nuclear norm minimization to complete $\calT_{:,:,i_3,i_4,\ldots,i_d}$ using only the sample locations $(i_1,\ldots,i_d) \in \Omega_M$.
        \EndFor
        \State Use Jennrich's algorithm on each completed subtensor $\calT_{:, :, S, i_4, \ldots, i_d}$ to recover the factor pairs $\mA^{(1)}_{:,\ell} \circ \mA^{(2)}_{:,\ell}$ for $\ell$ such that $\prod_{\substack{k = 3\\k \neq t}}^{d}\mA^{(k)}_{i_k,\ell} \neq 0$.
    \EndFor
    \For{$k = 3,\ldots,d$}
        \State $R = [r]$
        \For{$(i_3,\ldots,i_{k-1},i_{k+1},\ldots,i_d) \in Z_k$}
            \State $\mB = \text{CensoredLeastSquares}(\calT_{:,:,i_3,\ldots,i_{k-1},:,i_{k+1},\ldots,i_d},\Omega_k,\mA^{(1)},\mA^{(2)})$
            \If{$k \neq d$}
                \State $\mA^{(k)}_{:,R} = \mB_{:,R}$
            \Else
                \State $\mA^{(k)}_{:,R} = \left(\prod_{t = 3}^{d-1}\mA^{(t)}_{i_t,\ell}\right)^{-1}\mB_{:,R}$
            \EndIf
            \State $R = \{\ell : \mA^{(k)}_{:,\ell} = \vct{0}\}$
        \EndFor
    \EndFor
    \State \Return $\mA^{(1)},\ldots,\mA^{(d)}$
\end{algorithmic}
\end{algorithm}

\subsection{Proof of Theorem~\ref{thm:non_adaptive_dust_sampling}}

\subsubsection{Completing $sm$ slices}
In lines 9-11 above, we use nuclear norm minimization to complete $|S| \cdot |Z_3| = sm$ slices of the tensor which each have dimensions $n_1 \times n_2$. Since each slice $\calT_{:,:,i_3,\ldots,i_d}$ has $\text{col-span}(\calT_{:,:,i_3,\ldots,i_d}) \subseteq \text{col-span}(\mA^{(1)})$ and $\text{row-span}(\calT_{:,:,i_3,\ldots,i_d}) \subseteq \text{col-span}(\mA^{(2)})$, and the factor matrices satisfy $\mu(\mA^{(1)}) \le \mu_0$ and $\mu(\mA^{(2)}) \le \mu_0$, each slice satisfies the assumptions of \cite{chen2015incoherence}. Hence, for each slice $\calT_{:,:,i_3,\ldots,i_d}, i_3 \in S, (i_4,\ldots,i_d) \in Z_3$, with probability $1-c_1(n_1+n_2)^{-c_2}$, nuclear norm minimization completes the slice. By taking a simple union bound over all $sm$ slices, we have that with probability at least $1-c_1sm(n_1+n_2)^{-c_2}$, all $sm$ slices are completed. For the rest of the proof, we will assume that these $sm$ slices were successfully completed.

\subsubsection{Using Jennrich's algorithm on $m$ subtensors to learn $\mA^{(1)}$ and $\mA^{(2)}$}

In lines 9 and 12, we use Jennrich's algorithm on $|Z_3| = m$ mode-$(1,2,3)$ subtensors of size $n_1 \times n_2 \times s$. In the exact same way as in Section~\ref{sec:nonadaptive_proof_Jennrich}, all columns of $\mA^{(1)}$ and $\mA^{(2)}$ are recovered and paired correctly with probability at least $1-r(1-(1-z)^{d-3})^m$. For the rest of the proof, we will assume that all $r$ columns of $\mA^{(1)}$ and $\mA^{(2)}$ were recovered and paired correctly.

\subsubsection{Using slice-by-slice censored least squares to learn $\mA^{(3)},\ldots,\mA^{(d)}$}

Learning the factor matrices $\mA^{(3)},\ldots,\mA^{(d)}$ works the same way as in Section~\ref{sec:nonadaptive_proof_CLS}, except instead of observing $r$ carefully chosen samples in each mode-$(1,2)$ slice of a mode-$(1,2,k)$ subtensor, we observe random samples, which introduces an additional point of failiure for our algorithm. 

Again, for each mode $k = 3,\ldots,d$, we use Algorithm~\ref{alg:CLS} on $m$ mode-$(1,2,k)$ subtensors of size $n_1 \times n_2 \times n_k$. As before, we can express the subtensor $\calT_{:,:,i_3,\ldots,i_{k-1},:,i_{k+1},\ldots,i_d}$ for some $(i_3,\ldots,i_{k-1},i_{k+1},\ldots,i_d) \in Z_k$ as \[\calT_{:,:,i_3,\ldots,i_{k-1},:,i_{k+1},\ldots,i_d} = \left[\left[\mA^{(1)},\mA^{(2)},\mC^{(k)}\right]\right]\] where \[\mC^{(k)} = \begin{bmatrix}(\mA^{(3)}_{i_3,1} \cdots \mA^{(k-1)}_{i_{k-1},1}\mA^{(k+1)}_{i_{k+1},1}\cdots \mA^{(d)}_{i_d,1})\mA^{(k)}_{:,1} & \cdots & (\mA^{(3)}_{i_3,r} \cdots \mA^{(k-1)}_{i_{k-1},r}\mA^{(k+1)}_{i_{k+1},r}\cdots \mA^{(d)}_{i_d,r})\mA^{(k)}_{:,r}\end{bmatrix},\] i.e., $\mA^{(k)}$, but with each column rescaled.

In the $i_k$-th mode-$(1,2)$ slice of this subtensor, we have sampled each of the $n_1n_2$ entries with probability $c_3\tfrac{\mu_0^2r^2\log n_k}{n_1n_2}$. So to learn the $i_k$-th row of $\mA^{(k)}$, Algorithm~\ref{alg:CLS} solves a system of linear equations where the corresponding matrix is formed by keeping (or deleting) each row of $\mA^{(1)} \odot \mA^{(2)}$ with probability $c_3\tfrac{\mu_0^2r^2\log n_k}{n_1n_2}$. By Lemma~\ref{lem:coherence_KR} (proved in Appendix~\ref{sec:Lemmas}), the matrix $\mA^{(1)} \odot \mA^{(2)} \in \R^{n_1n_2 \times r}$ has coherence bounded by $\mu(\mA^{(1)} \odot \mA^{(2)}) \le \mu(\mA^{(1)})\mu(\mA^{(2)})r \le \mu_0^2r$. Then, by using Lemma~\ref{lem:random_subset_full_rank} (also proved in Appendix~\ref{sec:Lemmas}), the probability that this system has rank-$r$ is at least \[1-r\exp\left(-\dfrac{n_1n_2 \cdot c_3\tfrac{\mu_0^2r^2\log n_k}{n_1n_2}}{r\mu(\mA^{(1)} \odot \mA^{(2)})}\right) \ge 1-r\exp\left(-\dfrac{n_1n_2 \cdot c_3\tfrac{\mu_0^2r^2\log n_k}{n_1n_2}}{\mu_0^2r^2}\right) = 1-rn_k^{-c_3}.\] With a simple union bound, the probability that all $n_k$ systems of equations (one for each slice of $\calT_{:,:,i_3,\ldots,i_{k-1},:,i_{k+1},\ldots,i_d}$) have rank-$r$ is at least $1-rn_k^{-c_3+1}$. Then, the probability that all $n_k$ systems of equations for all $m$ subtensors $\calT_{:,:,i_3,\ldots,i_{k-1},:,i_{k+1},\ldots,i_d}$ for $(i_3,\ldots,i_{k-1},i_{k+1},\ldots,i_d) \in Z_k$ have rank-$r$ is at least $1-mrn_k^{-c_3+1}$. 

If for all $m$ mode-$(1,2,k)$ subtensors $\calT_{:,:,i_3,\ldots,i_{k-1},:,i_{k+1},\ldots,i_d}$, all $n_k$ systems of equations have rank-$r$, then all $m$ calls of Algorithm~\ref{alg:CLS} return a version of $\mA^{(k)}$ with each column rescaled. Conditioned on this event, the probability that all $r$ of the columns of $\mA^{(k)}$ are recovered is at least $1-r(1-(1-z)^{d-3})^m$. 

Therefore, the probability of $\mA^{(k)}$ being recovered (up to scaling each column by a different constant factor) is at least $1-r(1-(1-z)^{d-3})^m-mrn_k^{-c_3+1}$. With a union bound over all modes $k = 3,\ldots,d$, the probability that $\mA^{(3)},\ldots,\mA^{(d)}$ are all recovered is at least $1-(d-3)r(1-(1-z)^{(d-3)})^m-mr\sum_{k = 3}^{d}n_k^{-c_3+1}$.

\subsubsection{Putting it all together}
The total probability of our algorithm failing is the sum of the probabilities of (i) failing to complete one of the $sm$ mode-$(1,2)$ slices, (ii) failing to learn a column of $\mA^{(1)}$ and $\mA^{(2)}$, and (iii) failing to learn a column of one of $\mA^{(3)},\ldots,\mA^{(d)}$, which is \begin{align*}&sm\delta + r(1-(1-z)^{d-3})^m + (d-3)r(1-(1-z)^{d-3})^m + mr\sum_{k = 3}^{d}n_k^{-c_3+1} \\ =& sm\delta + (d-2)r(1-(1-z)^{d-3})^m + mr\sum_{k = 3}^{d}n_k^{-c_3+1}.\end{align*} 

The total number of samples is bounded by \[|\Omega| = \left|\Omega_M \cup \bigcup_{k = 3}^{d}\Omega_k\right| \le |\Omega_M| + \sum_{k = 3}^{d}|\Omega_k| =: N.\]

Since each of the sets $\Omega_M$ and $\Omega_k$ for $k = 3,\ldots,d$ is formed by sampling a region of the tensor independently with some constant probability, the cardinalities of $\Omega_M$ and $\Omega_k$ for $k = 3,\ldots,d$ follow a binomial distribution. Specifically,

\[|\Omega_M| \sim \text{Bin}\left(smn_1n_2,c_0\dfrac{\mu_0r\log^2(n_1+n_2)}{\min\{n_1,n_2\}}\right) \quad \text{and} \quad |\Omega_k| \sim \text{Bin}\left(mn_1n_2n_k,c_3\dfrac{\mu_0^2r^2\log n_k}{n_1n_2}\right).\]

Hence, the expected number of samples is bounded by \begin{align*}
\E|\Omega| &\le \E N
\\
&= \E|\Omega_M|+\sum_{k = 3}^{d}\E|\Omega_k| 
\\
&= smn_1n_2 \cdot c_0\dfrac{\mu_0 r\log^2(n_1+n_2)}{\min\{n_1,n_2\}} + \sum_{k = 3}^{d}mn_1n_2n_k \cdot c_3\dfrac{\mu_0^2r^2\log n_k}{n_1n_2}
\\
&= c_0sm\mu_0r\max\{n_1,n_2\}\log^2(n_1+n_2) + c_3m\mu_0^2r^2\sum_{k = 3}^{d}n_k \log n_k
\end{align*}

Furthermore, since $N = |\Omega_M|+\sum_{k = 3}^{d}|\Omega_k|$ is the sum of several independent indicator random variables (one for each possible sample location in $\Omega_M$ and $\Omega_k$ for $k = 3, \ldots, d$), it satisfies the standard Chernoff bound that \[\P\left\{N \ge (1+\rho)\E N\right\} \le \left(\dfrac{e^{-\rho}}{(1+\rho)^{1+\rho}}\right)^{\E N} \quad \text{for} \quad \rho > 0.\] By taking $\rho = e-1$, we have that the number of samples used satisfies \[|\Omega| \le N \le e\E N = c_0 e sm\mu_0 r\max\{n_1,n_2\}\log^2(n_1+n_2) + c_3 e m\mu_0^2r^2\sum_{k = 3}^{d}n_k \log n_k\] with probability at least $1-e^{-\E N}$.

\subsection{Runtime of Algorithm~\ref{alg:non_adaptive_dust_sampling}}
\label{sec:NonadaptiveRuntime}
In this section, we outline the computational complexity of the steps of Algorithm~\ref{alg:non_adaptive_dust_sampling}. 
\\
{\bf Sample tensor}: This takes $O(|\Omega|) = O(sm\mu_0r\max\{n_1,n_2\}\log^2(n_1+n_2)+m\mu_0^2r^2\sum_{k=3}^{d}n_k\log n_k)$ operations.
\\
{\bf Complete $sm$ mode-$(1,2)$ slices via nuclear norm minimization}: The nuclear norm minimization problem $\min_{\mX}\|\mX\|_{*}$ such that $\mX_{i,j} = \mM_{i,j}$ for $(i,j) \in \Omega$ can be solve in polynomial time using a semidefinite program \cite{goldfarb2009solving}.
\\
{\bf Use Jennrich's Algorithm on $m$ mode-$(1,2,3)$ subtensors of size $n_1 \times n_2 \times s$}: In exactly the same way as for our adaptive sampling algorithm, the total cost of performing Jennrich's algorithm on $m$ mode-$(1,2,3)$ subtensors of size $n_1 \times n_2 \times s$ is $O(mr(n_1+n_2)^2+smn_1n_2)$.
\\
{\bf Using Slice-by-Slice Censored Least Squares on $m$ mode-$(1,2,k)$ subtensors}: Fix one mode-$(1,2,k)$ subtensor. Let $\omega_{i_k}$ be the number of samples observed in the $i_k$-th slice of this mode-$(1,2,k)$ subtensor. Computing the $\omega_{i_k}$ rows of $\mA^{(1)}\odot \mA^{(2)}$ takes $O(r\omega_{i_k})$ operations and solving the resulting $\omega_{i_k} \times r$ system of equations takes $O(r^2\omega_{i_k})$ operations. So the total number of operations for performing slice-by-slice censored least squares on one mode-$(1,2,k)$ subtensor is $O(r^2\sum_{i_k = 1}^{n_k}\omega_{i_k}) = O(\mu_0^2r^4n_k\log n_k)$. Therefore, $O(m\mu_0^2r^4n_k\log n_k)$ operations are needed to perform slice-by-slice censored least squares on $m$ mode-$(1,2,k)$ subtensors. Doing this for modes $k = 3,\ldots,d$ thus takes $O(m\mu_0^2r^4\sum_{k = 3}^{d}n_k\log n_k)$ operations.
\\
Hence, Algorithm~\ref{alg:non_adaptive_dust_sampling} runs in polynomial time, and the total number of operations required is dominated by the cost of completing the $sm$ mode-$(1,2)$ densely sampled slices via nuclear norm minimization. 

\subsection{Proof of Theorem~\ref{thm:zerofree_adapt}}

Theorem~\ref{thm:zerofree_adapt} is a special case of Theorem~\ref{thm:adaptive_sampling}. Specifically, if for $k = 3,\ldots,d$, $\mA^{(k)}$ has no entries which are exactly $0$, then we can use Theorem~\ref{thm:adaptive_sampling} and Algorithm~\ref{alg:adaptive_sampling} with $z = 0$ and $m = 1$. Because there are no zeros in $\mA^{(3)},\ldots,\mA^{(d)}$, we can simply pick indices $i^*_3 \in [n_3], \ldots, i^*_d \in [n_d]$ and take each $Z_k$ to be the set with the single tuple $(i^*_3,\ldots,i^*_{k-1},i^*_{k+1},\ldots,i^*_d)$. With these choices, Theorem~\ref{thm:adaptive_sampling} and Algorithm~\ref{alg:adaptive_sampling} reduce to Theorem~\ref{thm:zerofree_adapt} and Algorithm~\ref{alg:zerofree_adapt}. 

\subsection{Proof of Theorem~\ref{thm:zerofree_nonadapt}}

Similarly, Theorem~\ref{thm:zerofree_nonadapt} is a special case of Theorem~\ref{thm:non_adaptive_dust_sampling}. Again, if for $k = 3,\ldots,d$, $\mA^{(k)}$ has no entries which are exactly $0$, then we can use Theorem~\ref{thm:non_adaptive_dust_sampling} and Algorithm~\ref{alg:non_adaptive_dust_sampling} with $z = 0$ and $m = 1$. We can again pick indices $i^*_3 \in [n_3], \ldots, i^*_d \in [n_d]$ and take each $Z_k$ to be the set with the single tuple $(i^*_3,\ldots,i^*_{k-1},i^*_{k+1},\ldots,i^*_d)$. With these choices, Theorem~\ref{thm:non_adaptive_dust_sampling} and Algorithm~\ref{alg:non_adaptive_dust_sampling} reduce to Theorem~\ref{thm:zerofree_nonadapt} and Algorithm~\ref{alg:zerofree_nonadapt}. 

\subsection{Proof of Theorem~\ref{thm:3mode_adapt}}
Theorem~\ref{thm:3mode_adapt} is also a special case of Theorem~\ref{thm:adaptive_sampling}. Specifically, if $\calT$ is an order $d = 3$ tensor, then we can use Theorem~\ref{thm:adaptive_sampling} and Algorithm~\ref{alg:adaptive_sampling} with $z = 1-\tfrac{1}{n_3}$ (because each column of $\mA^{(3)}$ needs to have at least one non-zero entry) and $m = 1$. With these choices, Theorem~\ref{thm:adaptive_sampling} and Algorithm~\ref{alg:adaptive_sampling} reduce to Theorem~\ref{thm:3mode_adapt} and Algorithm~\ref{alg:3mode_adapt}. 

\subsection{Proof of Theorem~\ref{thm:3mode_nonadapt}}
Theorem~\ref{thm:3mode_nonadapt} is also a special case of Theorem~\ref{thm:non_adaptive_dust_sampling}. Specifically, if $\calT$ is an order $d = 3$ tensor, then we can use Theorem~\ref{alg:non_adaptive_dust_sampling} and Algorithm~\ref{alg:non_adaptive_dust_sampling} with $z = 1-\tfrac{1}{n_3}$ (because each column of $\mA^{(3)}$ needs to have at least one non-zero entry) and $m = 1$. With these choices, Theorem~\ref{thm:non_adaptive_dust_sampling} and Algorithm~\ref{alg:non_adaptive_dust_sampling} reduce to Theorem~\ref{thm:3mode_nonadapt} and Algorithm~\ref{alg:3mode_nonadapt}.

\section{Misc Lemmas}
\label{sec:Lemmas}

\begin{lemma}
\label{lem:random_subset_full_rank}
Let $\mZ \in \R^{n \times r}$ with $\rank(\mZ) = r$. Suppose we generate a new matrix $\widetilde{\mZ}$ by keeping (or deleting) each row of $\mZ$ independently with probability $p$. Then, $\rank(\widetilde{\mZ}) = r$ with probability at least $1-r\exp\left(-\tfrac{np}{r\mu(\mZ)}\right)$. In particular, if $p \ge \dfrac{\mu(\mZ)r\log(r/\delta)}{n}$, then $\rank(\widetilde{\mZ}) = r$ with probability at least $1-\delta$. 
\end{lemma}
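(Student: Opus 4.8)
The plan is to recast the rank condition as a lower bound on the smallest eigenvalue of a sum of independent rank-one random matrices, and then apply a matrix Chernoff inequality. Since $\rank(\mZ) = r$, a thin QR factorization gives $\mZ = \mQ\mR$ with $\mQ \in \R^{n\times r}$ having orthonormal columns ($\mQ^\T\mQ = \mId_r$) and $\mR\in\R^{r\times r}$ invertible. The same rows are retained when forming $\widetilde{\mZ}$ and $\widetilde{\mQ}$, and $\widetilde{\mZ} = \widetilde{\mQ}\mR$ with $\mR$ invertible, so $\rank(\widetilde{\mZ}) = \rank(\widetilde{\mQ})$ and it suffices to argue about $\mQ$. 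Writing $\vq_i\in\R^r$ for the $i$-th row of $\mQ$ (as a column vector) and letting $\xi_i$ be i.i.d. $\text{Bernoulli}(p)$ indicators of the kept rows, I would express $\widetilde{\mQ}^\T\widetilde{\mQ} = \sum_{i=1}^n \xi_i\vq_i\vq_i^\T$ and observe that $\rank(\widetilde{\mQ}) = r$ precisely when $\lambda_{\min}(\widetilde{\mQ}^\T\widetilde{\mQ}) > 0$.

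Next I would collect the two quantities that feed the concentration bound. Orthonormality gives $\sum_{i=1}^n \vq_i\vq_i^\T = \mQ^\T\mQ = \mId_r$, so $\sum_i \E[\xi_i\vq_i\vq_i^\T] = p\,\mId_r$ and hence $\mu_{\min} := \lambda_{\min}(p\,\mId_r) = p$. For the per-term bound, note that $\|\text{proj}_{\text{col}(\mZ)}\ve_i\|_2^2 = \|\vq_i\|_2^2$, so the coherence identity $\mu(\mZ) = \tfrac{n}{r}\max_i\|\vq_i\|_2^2$ yields the deterministic bound $\lambda_{\max}(\xi_i\vq_i\vq_i^\T) = \xi_i\|\vq_i\|_2^2 \le \tfrac{r\mu(\mZ)}{n} =: R$ for every $i$, using $\xi_i \in \{0,1\}$.

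The key step is the lower-tail matrix Chernoff inequality of Tropp: for independent PSD $r\times r$ matrices $X_i$ with $\lambda_{\max}(X_i)\le R$ and $\mu_{\min} = \lambda_{\min}(\sum_i\E X_i)$, one has $\P\{\lambda_{\min}(\sum_i X_i)\le(1-\delta)\mu_{\min}\} \le r\big[e^{-\delta}(1-\delta)^{-(1-\delta)}\big]^{\mu_{\min}/R}$ for $\delta\in[0,1)$. Applying this with $X_i = \xi_i\vq_i\vq_i^\T$, $\mu_{\min}=p$, and $R = r\mu(\mZ)/n$, I would observe that the rank-deficient event $\{\lambda_{\min}=0\}$ is contained in $\{\lambda_{\min}\le(1-\delta)p\}$ for \emph{every} $\delta<1$; taking the infimum of the bound over $\delta\in[0,1)$ and using $(1-\delta)^{1-\delta}\to1$ as $\delta\to1^-$ collapses the bracket to $e^{-1}$ and gives $\P\{\rank(\widetilde{\mZ})<r\} \le r\exp(-np/(r\mu(\mZ)))$. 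The ``in particular'' clause is then immediate: substituting $p = \mu(\mZ)r\log(r/\delta)/n$ makes the exponent equal to $-\log(r/\delta)$, so the bound equals $r\cdot(\delta/r) = \delta$. I expect the only delicate point to be this boundary passage $\delta\to1^-$; I would handle it by the monotone containment of events and the elementary limit $x^x\to1$ as $x\to0^+$, avoiding any appeal to continuity of measure, after which the rest is bookkeeping.
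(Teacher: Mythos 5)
Your proposal is correct and follows essentially the same route as the paper's proof: reduce via thin QR to a sum of independent rank-one PSD matrices with expectation $p\,\mId$, bound each term's norm by $r\mu(\mZ)/n$ using coherence, apply the lower-tail matrix Chernoff bound, and pass to the limit $\rho \to 1^{-}$ to collapse the bound to $r\exp\left(-\tfrac{np}{r\mu(\mZ)}\right)$. The only cosmetic difference is that you work directly with $\widetilde{\mQ}^\T\widetilde{\mQ}$ (using invertibility of $\mR$ to equate ranks), whereas the paper conjugates $\widetilde{\mZ}^\T\widetilde{\mZ}$ by $\mSigma^{-1/2}$; these are orthogonally similar matrices, so the arguments are equivalent.
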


\begin{proof}
Let $\mZ = \mQ\mR$ be the QR-decomposition of $\mZ$, i.e. $\mQ \in \R^{n \times r}$ is orthonormal and $\mR \in \R^{r \times r}$. Let $\vz_i \in \R^r$ and $\vq_i \in \R^r$ denote the $i$-th rows of $\mZ$ and $\mQ$ respectively. Also, let $\mSigma = \mZ^T\mZ = \mR^T\mR \in \R^{r \times r}$. Note that since $\rank(\mZ) = r$, we also have $\rank(\mR) = r$, and thus, $\rank(\mSigma) = \rank(\mR^T\mR) = r$.

Then, we have \[\widetilde{\mZ}^T\widetilde{\mZ} = \sum_{i = 1}^{n}\xi_i\vz_i\vz_i^T\] where $\xi_i$ are i.i.d. $\text{Bernoulli}(p)$ random variables. Now consider the matrix \[\mSigma^{-1/2}\widetilde{\mZ}^T\widetilde{\mZ}\mSigma^{-1/2} = \sum_{i = 1}^{n}\xi_i\mSigma^{-1/2}\vz_i\vz_i^T\mSigma^{-1/2}.\] This is a sum of independent symmetric PSD matrices. The norm of each term is bounded by \begin{align*}&\|\xi_i\mSigma^{-1/2}\vz_i\vz_i^T\mSigma^{-1/2}\| \le \|\mSigma^{-1/2}\vz_i\vz_i^T\mSigma^{-1/2}\| = \vz_i^T\mSigma^{-1/2}\mSigma^{-1/2}\vz_i 
\\
= &\vz_i^T\mSigma^{-1}\vz_i = \vq_i^T\mR(\mR^T\mR)^{-1}\mR^T\vq_i = \|\vq_i\|_2^2 \le \dfrac{\mu(\mZ)r}{n},\end{align*} where the last inequality is due to the definition of coherence $\mu(\mZ) = \displaystyle\tfrac{n}{r}\max_{1 \le i \le n}\|\text{proj}_{\text{col}(\mZ)}\ve_i\|_2^2 = \tfrac{n}{r}\max_{1 \le i \le n}\|\vq_i\|_2^2$. Furthermore, the expectation of $\mSigma^{-1/2}\widetilde{\mZ}^T\widetilde{\mZ}\mSigma^{-1/2}$ is \begin{align*}
&\E\left[ \mSigma^{-1/2}\widetilde{\mZ}^T\widetilde{\mZ}\mSigma^{-1/2}\right] = \sum_{i = 1}^{n}\E\left[\xi_i\mSigma^{-1/2}\vz_i\vz_i^T\mSigma^{-1/2}\right] = \sum_{i = 1}^{n}\E[\xi_i]\mSigma^{-1/2}\vz_i\vz_i^T\mSigma^{-1/2} 
\\
= &\sum_{i = 1}^{n}p\mSigma^{-1/2}\vz_i\vz_i^T\mSigma^{-1/2} = p\mSigma^{-1/2}\left[\sum_{i = 1}^{n}\vz_i\vz_i^T\right]\mSigma^{-1/2} = p\mSigma^{-1/2}\mZ^T\mZ\mSigma^{-1/2} = p\mSigma^{-1/2}\mSigma\mSigma^{-1/2} = p\mId
\end{align*}
So by applying the matrix Chernoff inequality, we have that for any $\rho \in [0,1)$, \[\P\left\{\lambda_{\text{min}}(\mSigma^{-1/2}\widetilde{\mZ}^T\widetilde{\mZ}\mSigma^{-1/2}) \le 0\right\} \le \P\left\{\lambda_{\text{min}}(\mSigma^{-1/2}\widetilde{\mZ}^T\widetilde{\mZ}\mSigma^{-1/2}) \le (1-\rho)p\right\} \le r\left(\tfrac{e^{-\rho}}{(1-\rho)^{1-\rho}}\right)^{\tfrac{np}{r\mu(\mZ)}}.\]
Since this bound holds for all $\rho \in [0,1)$, we have that \[\P\left\{\lambda_{\text{min}}(\mSigma^{-1/2}\widetilde{\mZ}^T\widetilde{\mZ}\mSigma^{-1/2}) \le 0\right\} \le \lim_{\rho \to 1^{-}} r\left(\tfrac{e^{-\rho}}{(1-\rho)^{1-\rho}}\right)^{\tfrac{np}{r\mu(\mZ)}} = r\exp\left(-\tfrac{np}{r\mu(\mZ)}\right).\]
Finally, since $\mSigma$ is full-rank, we have that $\rank(\widetilde{\mZ}) = r$ is equivalent to $\lambda_{\text{min}}(\mSigma^{-1/2}\widetilde{\mZ}^T\widetilde{\mZ}\mSigma^{-1/2}) > 0$, i.e. the complement of the event above. Thus, the probability that $\rank(\widetilde{\mZ}) = r$ is at least $1-r\exp\left(-\tfrac{np}{r\mu(\mZ)}\right)$.
\end{proof}

\begin{lemma}
\label{lem:coherence_KR}
If $\mA \in \R^{n_1 \times r}$ and $\mB \in \R^{n_2 \times r}$, then \[\mu(\mA \odot \mB) \le \mu(\mA)\mu(\mB)r.\] 
\end{lemma}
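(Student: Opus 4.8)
The plan is to work directly from the definition of coherence in terms of projections onto the columnspace, and to exploit the fact that the Khatri--Rao product $\mA \odot \mB$ lives inside the tensor (Kronecker) product of the two individual columnspaces. Throughout I assume, as is implicit in the statement, that $\mA$, $\mB$, and $\mA \odot \mB$ all have full column rank $r$, so that each coherence is normalized by the same dimension $r$. Writing $U = \text{col}(\mA) \subseteq \R^{n_1}$ and $V = \text{col}(\mB) \subseteq \R^{n_2}$, the goal reduces to bounding $\max_{(i,j)} \|\text{proj}_{\text{col}(\mA \odot \mB)} \ve_{(i,j)}\|_2^2$, since $\mu(\mA \odot \mB) = \tfrac{n_1 n_2}{r}\max_{(i,j)} \|\text{proj}_{\text{col}(\mA \odot \mB)} \ve_{(i,j)}\|_2^2$, where $\ve_{(i,j)}$ ranges over the standard basis of $\R^{n_1 n_2}$.

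First I would record three elementary facts. (i) The $\ell$-th column of $\mA \odot \mB$ is $\va_\ell \otimes \vb_\ell \in U \otimes V$, so $\text{col}(\mA \odot \mB) \subseteq U \otimes V$; since projecting onto a smaller subspace can only shrink norms, $\|\text{proj}_{\text{col}(\mA \odot \mB)} \ve_{(i,j)}\|_2 \le \|\text{proj}_{U \otimes V} \ve_{(i,j)}\|_2$. (ii) Under the identification $\R^{n_1 n_2} \cong \R^{n_1} \otimes \R^{n_2}$, the standard basis vector $\ve_{(i,j)}$ equals $\ve_i \otimes \ve_j$, and the orthogonal projection onto a tensor-product subspace factors as $\text{proj}_{U \otimes V} = \text{proj}_U \otimes \text{proj}_V$ (this follows by Kronecker-multiplying orthonormal bases of $U$ and $V$). (iii) The Euclidean norm is multiplicative under Kronecker products.

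Combining these, I obtain $\|\text{proj}_{U \otimes V} \ve_{(i,j)}\|_2^2 = \|(\text{proj}_U \ve_i) \otimes (\text{proj}_V \ve_j)\|_2^2 = \|\text{proj}_U \ve_i\|_2^2 \, \|\text{proj}_V \ve_j\|_2^2$. By the definition of coherence applied to $\mA$ and $\mB$, each factor is bounded, namely $\|\text{proj}_U \ve_i\|_2^2 \le \tfrac{r}{n_1}\mu(\mA)$ and $\|\text{proj}_V \ve_j\|_2^2 \le \tfrac{r}{n_2}\mu(\mB)$. Hence $\max_{(i,j)} \|\text{proj}_{\text{col}(\mA \odot \mB)} \ve_{(i,j)}\|_2^2 \le \tfrac{r^2}{n_1 n_2}\mu(\mA)\mu(\mB)$, and multiplying by $\tfrac{n_1 n_2}{r}$ yields $\mu(\mA \odot \mB) \le r\,\mu(\mA)\mu(\mB)$, as claimed.

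The argument is short, so there is no serious analytic obstacle; the step that requires the most care is the bookkeeping around the normalizing dimension. The factorization $\text{proj}_{U \otimes V} = \text{proj}_U \otimes \text{proj}_V$ is clean, but I must ensure that $\text{col}(\mA \odot \mB)$ genuinely has dimension $r$, so that its coherence is normalized by $\tfrac{n_1 n_2}{r}$ and not by a smaller rank. Were $\mA \odot \mB$ rank-deficient, the inclusion argument would instead produce the weaker bound $\tfrac{r^2}{r'}\mu(\mA)\mu(\mB)$ with $r' = \rank(\mA \odot \mB) < r$. Since this lemma is invoked on $\mA^{(1)} \odot \mA^{(2)}$, which the completion algorithm already requires to have full column rank $r$, the full-rank hypothesis is harmless in context.
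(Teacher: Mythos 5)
Your proof is correct and takes essentially the same route as the paper's: both arguments rest on the inclusion $\operatorname{col}(\mA \odot \mB) \subseteq \operatorname{col}(\mA) \otimes \operatorname{col}(\mB)$, monotonicity of projection norms under subspace inclusion, and multiplicativity of norms under Kronecker products, with the only cosmetic difference being that the paper realizes the projections concretely through QR factors $\mA = \mU\mX$, $\mB = \mV\mY$ while you work with abstract projection operators $\text{proj}_U \otimes \text{proj}_V$. Your closing remark about needing $\rank(\mA \odot \mB) = r$ for the normalization $\tfrac{n_1 n_2}{r}$ addresses a point the paper leaves implicit, and it is indeed harmless in context since assumption (b) guarantees $\mA^{(1)} \odot \mA^{(2)}$ has full column rank.
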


\begin{proof}
Let $\mA = \mU\mX$, $\mB = \mV\mY$, and $\mA \odot \mB = \mW\mZ$ be the QR-decompositions of $\mA$, $\mB$, and $\mA \odot \mB$ respectively. Then, since $\mW\mZ = \mA \odot \mB = (\mU\mX) \odot (\mV\mY) = (\mU \otimes \mV)(\mX \odot \mY)$, we have that $\text{span}(\text{col}(\mW)) \subset \text{span}(\text{col}(\mU \otimes \mV))$. Hence, \[\|\text{proj}_{\mW}\ve_i\|_2^2 \le \|\text{proj}_{\mU \otimes \mV}\ve_i\|_2^2 = \|(\mU \otimes \mV)^T\ve_i\|_2^2 = \|\mU^T\ve_{i'}\|_2^2 \cdot \|\mV^T\ve_{i''}\|_2^2 \le \dfrac{\mu(\mA)r}{n_1} \cdot \dfrac{\mu(\mB)r}{n_2}\] for all $i$, and thus, \[\mu(\mA \odot \mB) = \dfrac{n_1n_2}{r}\max_{1 \le i \le n_1n_2}\|\text{proj}_{\mW}\ve_i\|_2^2 \le \dfrac{n_1n_2}{r} \cdot \dfrac{\mu(\mA)r}{n_1} \cdot \dfrac{\mu(\mB)r}{n_2} = \mu(\mA)\mu(\mB)r.\]
\end{proof}

\bibliographystyle{plain}
\bibliography{ref}
\end{document}